\title{Rational integrability of trigonometric polynomial potentials on the flat torus}
\newtheorem{defi}{Definition}
\newtheorem{thm}{Theorem}
\newtheorem{prop}{Proposition}
\newtheorem{cor}{Corollary}
\newtheorem{lem}{Lemma}
\abstract{We consider a lattice $\mathcal{L}\subset \mathbb{R}^n$ and a trigonometric potential $V$ with frequencies $k\in\mathcal{L}$. We then prove a strong integrability condition on $V$, using the support of its Fourrier transform. We then use this condition to prove that a real trigonometric polynomial potential is integrable if and only if it separates up to rotation of the coordinates. Removing the real condition, we also make a classification of integrable potentials in dimension $2$ and $3$, and recover several integrable cases. These potentials after a complex variable change become real, and correspond to generalized Toda integrable potentials. Moreover, along the proof, some of them with high degree first integrals are explicitly integrated.}
\keywords{Trigonometric polynomials, Differential Galois theory, Integrability, Toda lattice}
\begin{document}


\section{Integrability of potentials on a flat torus}

In this article, we consider trigonometric polynomial potentials
$$V(q)=\sum\limits_{k\in\mathcal{L}^*} a_ke^{ik.q}$$
with $\mathcal{L}\subset \mathbb{R}^n$ a $n$-dimensional lattice and $a_k\in\mathbb{C}$ with finitely many non zero $a_k$. Remark that the coefficient $a_0$ is assumed to be $0$ as a potential is defined up to addition of a constant. This defines a multiperiodic function $V$, with periods the lattice $2\pi\mathcal{L}$. Thus after quotient, the function $V$ can be seen as a function on $\mathcal{T}_n=\mathbb{R}^n/(2\pi\mathcal{L})$. The function $V$ has a Fourier transform
$$\hat{V}:\mathcal{L} \mapsto \mathbb{C},\quad \hat{V}(k)=a_k$$
This type of potential is for the torus the equivalent of polynomial potentials on $\mathbb{R}^n$. The periodicity condition simply requires to replace monomials in $q$ by periodic functions, and the exponentials are the simplest ones. As for polynomial potentials on $\mathbb{R}^n$, the coefficients $a_k$ need not to be real, nor the values of $V$. In all generality, the potential $V$ has thus complex values, and as a trigonometric polynomial can be extended on the complex domain $\mathbb{C}^n/(2\pi\mathcal{L})$. Still, two interesting real cases appear.
\begin{itemize}
\item The case of $V$ with real values on $\mathcal{T}_n$ can be written using complex conjugacy condition $\overline{V}=V$, giving the condition on the coefficients $\bar{a}_k=a_{-k}$. We will then say that $V$ is a real trigonometric polynomial potential.
\item The case with all coefficients $a_k$ real leads to a potential $V$ with real values on $i\mathbb{R}^n/(2\pi\mathcal{L})$. After multiplying by $i$ the coordinates, we then obtain a real function
$$V(q)=\sum\limits_{k\in\mathcal{L}} a_ke^{k.q}$$
which generalize Toda type potentials \cite{1,2,3,4,17,18}. This potential is still multiperiodic, but with purely imaginary periods in the lattice $2i\pi\mathcal{L}$.
\end{itemize}

A potential $V$ is associated to a Hamiltonian system with Hamiltonian
$$H=\frac{1}{2}\sum\limits_{i=1}^n p_i^2+V(q)$$
This defines a Hamiltonian system with $n$ degrees of freedom, for which a good definition of integrability exists.

\begin{defi}
A Hamiltonian $H$ with $n$ degrees of freedom is called Liouville integrable if there exists $n$ functions $F_1=H,\dots,F_n$ which pairwise Poisson commute and are functionally independent.
\end{defi}

An important point in the search of first integrals, and even more in non integrability proofs is to precise the class of functions in which the first integrals are searched. Let us note $\omega_1,\dots,\omega_n$ a basis of the lattice $\mathcal{L}$. The Hamiltonian is a rational function in $(p_j,\exp{i\omega_j.q})_{j=1\dots n}$. The set of such rational functions
$$K=\mathbb{C}\left(p_1,\dots,p_n,e^{i\omega_1.q},\dots, e^{i\omega_n.q}\right)$$
is a differential field (a field which is stable by the derivations $\partial_{p_i},\partial_{q_1}$), and thus is a reasonable class for searching first integrals. Thus in the following, we will always search for first integrals in $K$, and prove non integrability for first integrals in $K$. We will obtain the following main result in section $2$.

\begin{thm}\label{thmmain}
Let $V$ be a Liouville integrable trigonometric polynomial potential on $\mathcal{T}_n$. Let us note $\mathcal{C}$ the convex hull of the support $\mathcal{S}$ of $\hat{V}$, and $v_1,\dots,v_p$ its summits. Then
\begin{itemize}
\item The set $\mathcal{S}$ is included in $\bigcup_{i=1}^p \mathbb{R}^+.v_i$.
\item If $v_i,v_j$ belong to the same edge, the angle $\widehat{v_iv_j}\in\left\lbrace 0,\frac{\pi}2,\frac{2\pi}3,\frac{3\pi}{4},\frac{5\pi}{6},\pi\right\rbrace$.
\item If $v_i,v_j$ belong to the same edge and $\widehat{v_iv_j}=\frac{2\pi}{3},\frac{3\pi}{4},\frac{5\pi}{6}$ then $\lVert v_i\lVert/\lVert v_j\lVert=1,2^{\pm 1/2},$ $3^{\pm 1/2}$ respectively.
\item If $v_i,v_j$ belong to the same edge and $\widehat{v_iv_j}\in\{\frac{2\pi}{3},\frac{5\pi}{6}\}$ then
$$\mathcal{S}\cap \mathbb{R}_+.v_i=\{v_i\},\quad \mathcal{S}\cap \mathbb{R}_+.v_j=\{v_j\}.$$
\item If $v_i,v_j$ belong to the same edge, $\widehat{v_iv_j}=\frac{3\pi}{4}$ and $\lVert v_i\lVert/\lVert v_j\lVert=\sqrt{2}$ then
$$\mathcal{S}\cap \mathbb{R}_+.v_i\subset\{v_i,v_i/2\},\quad \mathcal{S}\cap \mathbb{R}_+.v_j=\{v_j\}.$$
\end{itemize}
\end{thm}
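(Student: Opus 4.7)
The plan is to apply the Morales--Ramis--Sim\'o theory to a family of particular solutions arising from an asymptotic analysis of the Hamiltonian at complex infinity, indexed by the normal directions to the convex hull $\mathcal{C}$. For any $u\in\mathbb{R}^n$, set
$$P_u=\{k\in\mathcal{S}\,:\,k\cdot u\text{ maximal}\},\qquad V_u(q)=\sum_{k\in P_u}a_k e^{ik\cdot q}.$$
The conceptual backbone is a stability principle: if $V$ is Liouville integrable in $K$, then each limit potential $V_u$ is itself integrable in the corresponding field. Heuristically, setting $q(t)=x(t)+itu/\|u\|^2$, every Fourier mode outside $P_u$ decays exponentially relative to the dominant ones, so any first integral of $V$ specializes to one of $V_u$. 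Once this is established, the theorem becomes a classification of the possible $V_u$.

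For the first bullet, I would choose $u$ so that $P_u$ contains the single summit $v_i$ together with any lattice points of $\mathcal{S}$ that happen to lie in the supporting half-space. Constructing an explicit straight-line particular solution aligned with $v_i$, the variational equation splits along the transverse lattice directions into scalar equations of hypergeometric type, and the Morales--Ramis obstruction rules out any transverse Fourier term. Iterating over all summits gives $\mathcal{S}\subset\bigcup_i\mathbb{R}_+\cdot v_i$.

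For the remaining bullets, choose $u$ normal to an edge $[v_i,v_j]$ of $\mathcal{C}$. Then $V_u$ reduces, after a complex linear change of coordinates, to a planar trigonometric potential supported on two rays. The leading two-term part $a e^{iv_i\cdot q}+b e^{iv_j\cdot q}$ is a generalized two-body Toda potential and admits a one-parameter family of Darboux solutions along which the variational equation is of Lam\'e--Whittaker type; imposing the Morales--Ramis condition on its Galois group produces exactly the rank-$2$ Coxeter configurations, giving the angles $\pi/2,2\pi/3,3\pi/4,5\pi/6,\pi$ with the stated length ratios $1,\sqrt 2,\sqrt 3$ corresponding to $A_1\times A_1$, $A_2$, $B_2$, $G_2$ and the affine $A_1^{(1)}$. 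Additional intermediate Fourier terms on the two rays are then excluded by a higher-order variational analysis about the same Darboux solution, leaving open only the two mild exceptions permitted in the fifth bullet.

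The main obstacle is the very first step: making the stability-of-integrability principle rigorous. On the compact torus there is no real ``point at infinity'' to approach, so one must work in the complexified phase space and construct the asymptotic particular solutions by a careful Newton-polytope expansion, then show that first integrals in $K$ descend to independent first integrals of each $V_u$ without losing functional independence. Granted this reduction, the subsequent case-by-case Morales--Ramis analysis of planar two-term trigonometric potentials is of a classical Toda flavor and yields precisely the Coxeter-type list displayed in the theorem.
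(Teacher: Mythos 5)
Your skeleton --- take limits along normal directions, show integrability passes to the limit, then classify the limit potentials --- is indeed the paper's strategy, and the step you flag as the main obstacle is resolved there by a short argument: under the substitution $q\mapsto q-i\ln\alpha\,v$ the Hamiltonian and the first integrals become Laurent-type series in $\alpha$, their weight-homogeneous leading parts still Poisson-commute, and Ziglin's Lemma restores functional independence (Proposition \ref{propint}). But the tool you propose for the classification itself does not reach the statements you need. Bullets $1$, $4$ and $5$ concern Fourier modes that lie \emph{strictly inside} the supporting cone, i.e.\ at sub-leading order $\alpha^{\gamma}$ with $\gamma<1$: these modes are annihilated by the limit $V\mapsto V_u$ and therefore do not appear in the first variational equation along any particular solution of the limit system. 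A first-order Morales--Ramis analysis of $V_u$ is blind to them, so your claim that ``the Morales--Ramis obstruction rules out any transverse Fourier term'' cannot work as stated. The paper instead expands the commutation relation $\{I,H\}=0$ to order $\alpha^{\gamma}$, obtaining a cohomological equation $\{H_0,I_1\}=-\{I_0,W\}$ for the perturbation $W$, and converts it (Lemma \ref{lemint}) into the question of whether an explicit time-integral along a \emph{generic} orbit of the limit system stays in the exponential differential field $K_{\mid_\Gamma}$; Lemma \ref{lemint2} then gives degree/valuation obstructions to such integrals.

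This reduction exposes the second gap: for bullets $4$ and $5$ the limit system attached to an edge with angle $2\pi/3$, $3\pi/4$ or $5\pi/6$ is a genuinely non-separable two-degree-of-freedom system whose extra first integral has degree $3$, $4$ or $6$ in the momenta, and one must produce its generic orbits explicitly --- not just the special Darboux solution of the two-term potential you invoke. (Genericity is essential: the paper notes that on the non-generic orbit $\mu=0$ the obstruction disappears.) Producing these orbits is the content of Section 2.5, where the three Hamiltonians $H_1,H_2,H_3$ are shown to be completely rationally integrable by rationally parametrizing their Liouville tori and inverting the period maps in terms of two exponentials of time; this substantial computation has no counterpart in your sketch. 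For bullets $2$ and $3$ your Coxeter-type prediction matches the answer, and a Galois analysis of the two-term edge potential in the spirit of the Toda literature could plausibly replace the paper's direct integration argument there, but you would still need to justify that only the integrals generated by $H_0$ and $kp_1-p_2$ exist in $K$ (the field restriction rules out angular-momentum-type integrals), which is what makes the obstruction effective.
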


Theorem \ref{thmmain} implies that the support of the Fourier transform of an integrable potential cannot be large, as it should be included in finitely many semi-straight lines with specific (and quite large) angle between them. These conditions are very similar to \cite{17,16}, which hold in the case of polynomial first integrals, whereas Theorem \ref{thmmain} holds for rational first integrals. A similar result in given in \cite{20} assuming algebraic integrability (and so rational first integrals), but in the case where $\mathcal{S}$ has $n+1$ independent elements. This would imply here that in particular, $\mathcal{S}$ only has points in its convex envelope, and thus the statements of Theorem \ref{thmmain} about the interior of $\mathcal{C}$ are not covered. Theorem \ref{thmmain} will allow us to prove the following two corollaries

\begin{cor}\label{cor1}
A real trigonometric polynomial potential $V$ is Liouville integrable if and only if up to rotation of the coordinates, it can be written $V(q)=\sum_{i=1}^n V_i(q_i)$ (called a separable potential).
\end{cor}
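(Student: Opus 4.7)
The ``if'' direction is immediate: in the separating rotated coordinates $V(q)=\sum_{i=1}^n V_i(q_i)$, the one-dimensional energies $F_i=\tfrac12 p_i^2+V_i(q_i)$ Poisson commute pairwise and are functionally independent, yielding Liouville integrability. I focus on the converse.

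Assume $V$ real and integrable. The reality condition $\overline{a_k}=a_{-k}$ forces $\mathcal{S}=-\mathcal{S}$, so $\mathcal{C}$ is centrally symmetric and its vertices come in antipodal pairs $\{v_i,-v_i\}$. The semi-lines $\mathbb{R}^+ v_i$ of Theorem~\ref{thmmain} therefore combine into full lines $L_1,\dots,L_m$ through $0$, with $\mathcal{S}\subset L_1\cup\cdots\cup L_m$. After restricting to the affine span of $\mathcal{S}$ (on which the conjugate momenta to the cyclic complementary directions are trivial first integrals), I may assume $\mathcal{C}$ is full-dimensional, so $0$ is interior to $\mathcal{C}$. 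The objective now reduces to showing that the $L_i$ are pairwise orthogonal; once this holds, $m\le n$ by linear independence, and the rotation sending $L_1,\dots,L_n$ to the coordinate axes exhibits $V(Q)=\sum_{i=1}^n V_i(Q_i)$.

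\textbf{Planar case.} When $n=2$, the angles $\alpha_i=\widehat{v_iv_{i+1}}$ between consecutive vertices of the centrally symmetric polygon $\mathcal{C}$ sum to $2\pi$, and central symmetry halves this: $\alpha_1+\cdots+\alpha_p=\pi$ with $2p$ the number of vertices. Theorem~\ref{thmmain} confines each $\alpha_i$ to $\{\tfrac\pi2,\tfrac{2\pi}3,\tfrac{3\pi}4,\tfrac{5\pi}6\}$ (the extreme values $0$ and $\pi$ being impossible between distinct non-opposite vertices since $0$ is interior to $\mathcal{C}$). The only decomposition of $\pi$ by such angles is $\tfrac\pi2+\tfrac\pi2$, so $p=2$ and the two vertex lines are orthogonal.

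\textbf{General dimension.} For $n\ge 3$ I reduce to the planar case by slicing. Given two non-parallel vertex lines $L_i,L_j$, set $P=\mathrm{span}(L_i,L_j)$ and consider the centrally symmetric polygon $\mathcal{C}\cap P$. The four points $\pm v_i,\pm v_j$ are genuine vertices of $\mathcal{C}\cap P$, since each is the unique maximum on $\mathcal{C}$ of a linear functional whose restriction to $P$ is still linear. \emph{The main obstacle} is that $\mathcal{C}\cap P$ may carry additional vertices arising as intersections of $P$ with edges or higher faces of $\mathcal{C}$; such auxiliary points lie in general outside $\mathcal{S}$, so the angle bound of Theorem~\ref{thmmain} does not apply to them directly. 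To handle them I would express each auxiliary vertex as a convex combination of vertices of $\mathcal{C}$ lying on known semi-lines and then exploit the ratio and angle restrictions of Theorem~\ref{thmmain} to pin down its angular position within $P$. Once every vertex of $\mathcal{C}\cap P$ has its angular location constrained to the allowed set, the planar sum-of-angles argument extends to $\mathcal{C}\cap P$ and forces $\widehat{v_iv_j}=\pi/2$. Iterating over all pairs of vertex lines gives mutual orthogonality and completes the proof.
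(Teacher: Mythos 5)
Your ``if'' direction and your planar argument are fine, and your reduction of the converse to ``the summit lines $L_i$ are pairwise orthogonal'' is exactly the right target. The problem is the step that is supposed to deliver this in dimension $n\ge 3$. Your slicing argument genuinely breaks where you say it does: the polygon $\mathcal{C}\cap P$ acquires vertices that are intersections of $P$ with edges or higher-dimensional faces of $\mathcal{C}$, these points are not in $\mathcal{S}$, and none of the angle or ratio conditions of Theorem~\ref{thmmain} constrains them. Your proposed repair --- writing each auxiliary vertex as a convex combination of true summits and hoping the ratio/angle restrictions pin down its angular position --- is not an argument; a convex combination of two summits at angle $3\pi/4$, say, can sit at essentially any intermediate angle, so the angular location of an auxiliary vertex of $\mathcal{C}\cap P$ is not determined by the data you have. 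As written, the proof does not close for $n\ge 3$.

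The paper avoids slicing entirely by exploiting the antipodal pairs more aggressively. The relevant input (it is how the first bullet of Theorem~\ref{thmmain} is proved, via Lemma~\ref{lemsummit}: for each summit $v$, all of $\mathcal{S}$ in the open half-space $\{k\cdot v>0\}$ lies on $\mathbb{R}_+v$) is that the spherical projections of any two \emph{distinct} summit directions are at angular distance at least $\pi/2$, in every dimension. Now fix a summit direction $u$; by reality $-u$ is also a summit direction, and for any other summit direction $w$ the two distances $d(w,u)$ and $d(w,-u)$ each are at least $\pi/2$ while summing to exactly $\pi$. Hence both equal $\pi/2$ and $w\perp u$. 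Iterating (rotate $u$ to $(0,\dots,0,1)$, then work in the equator, etc.) places all summit directions on mutually orthogonal axes, which is your orthogonality claim with no two-plane sections and no auxiliary vertices. I recommend you replace your slicing step by this argument; your planar angle-sum computation then becomes unnecessary (though correct) even for $n=2$.
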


Related results exist when the trigonometric polynomial condition is dropped but with additional conditions on the degree of first integrals \cite{11,12,13,15}. This corollary also proves in the case of trigonometric polynomial potential the conjecture that any additional irreducible first integral of an integrable case have a degree in momenta at most $2$, which is related to a conjecture of Kozlov \cite{14}. For complex potentials $V$, much more integrable cases are possible. We make a classification of them in dimension $2,3$.

\begin{cor}\label{cor2}
A Liouville integrable trigonometric polynomial potential in dimension $2$ or $3$ belongs to one of the following cases
\begin{itemize}
\item The separable potentials.
\item In dimension $2$, up to rotation/dilatation/symmetry of the coordinates
\begin{align*}
V(q_1,q_2)=& ae^{2iq_1}+be^{-iq_1+i\sqrt{3}q_2}+ce^{-iq_1-i\sqrt{3}q_2}\\
V(q_1,q_2)=& ae^{2iq_1}+be^{2iq_2}+ce^{-2iq_1-2iq_2}+de^{-iq_1-iq_2}\\
V(q_1,q_2)=& ae^{2iq_1}+be^{iq_1}+ce^{2iq_2}+de^{iq_2}+ee^{-iq_1-iq_2}\\
V(q_1,q_2)=& ae^{2iq_1}+be^{2i\sqrt{3}q_2}+ce^{-iq_1-i\sqrt{3}q_2}\\
V(q_1,q_2)=& ae^{2i\sqrt{3}q_1}+be^{2iq_2}+ce^{-i\sqrt{3}q_1-3iq_2}
\end{align*}
with $a,b,c\in\mathbb{C}$.
\item In dimension $3$, partially separable potentials, i.e. up to rotation of the coordinates which can be written $V(q)=V_1(q_1,q_2)+V_3(q_3)$ with $V_1$ integrable.
\item In dimension $3$, up to rotation/dilatation/symmetry of the coordinates
\begin{align*}
V(q_1,q_2,q_3)=& ae^{iq_1+iq_2}+be^{-iq_2+iq_3}+ce^{-iq_2-iq_3}+de^{-iq_1+iq_2}\\
V(q_1,q_2,q_3)=& ae^{2iq_1}+be^{-iq_1+iq_2}+ce^{-iq_2-iq_3}+de^{-iq_2+iq_3}+ee^{iq_1}\\
V(q_1,q_2,q_3)=& ae^{2iq_1}+be^{-iq_1+iq_2}+ce^{-iq_2+iq_3}+de^{-2iq_3}+ee^{iq_1}+fe^{-iq_3}\\
\end{align*}
with $a,b,c,d,e,f\in\mathbb{C}$.
\end{itemize}
\end{cor}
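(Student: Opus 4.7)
My plan is to derive the corollary as a direct consequence of Theorem \ref{thmmain} by enumerating the convex hulls $\mathcal{C}$ of $\mathcal{S}$ compatible with the ray, angle, and ratio restrictions stated there, and identifying each surviving configuration with a specialisation (possibly with some coefficients set to zero) of one of the listed families.

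In dimension $2$, if $\mathcal{C}$ has affine dimension at most $1$, then $\mathcal{S}$ lies on a single line through the origin and a rotation reduces $V$ to a function of a single coordinate, hence $V$ is separable. Otherwise $\mathcal{C}$ is a genuine polygon; when the origin lies strictly inside $\mathcal{C}$, the angles $\widehat{v_iv_{i+1}}$ between cyclically consecutive vertices sum to $2\pi$, and a direct arithmetic check shows that the only multisets from $\{\pi/2,2\pi/3,3\pi/4,5\pi/6\}$ summing to $2\pi$ are
\[
\{\pi/2,\pi/2,\pi/2,\pi/2\},\ \{2\pi/3,2\pi/3,2\pi/3\},\ \{\pi/2,2\pi/3,5\pi/6\},\ \{\pi/2,3\pi/4,3\pi/4\}.
\]
The first forces $\mathcal{S}$ onto two orthogonal axes, yielding the separable form. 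The second, together with the ratio $1$ on each $2\pi/3$ edge and the restriction of each ray to its vertex, produces the first listed two-dimensional family. The third and fourth, combined with the two cyclic orderings of the angles and the allowed ratios $3^{\pm 1/2}$ on a $5\pi/6$ edge and $2^{\pm 1/2}$ on a $3\pi/4$ edge, give the remaining listed two-dimensional families, with configurations containing fewer Fourier modes appearing as specialisations of these. The remaining positions of the origin, on the boundary of $\mathcal{C}$ or outside it, either reduce to the separable situation or yield no configuration compatible with Theorem \ref{thmmain}.

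In dimension $3$, the ray/angle/ratio constraints apply to every pair of vertices of $\mathcal{C}$ joined by an edge. I would first dispose of the case where $\mathcal{S}$ decomposes, up to rotation of the coordinates, into a planar piece and an axial piece orthogonal to it: then $V$ splits as $V(q)=V_1(q_1,q_2)+V_3(q_3)$ with $V_1$ integrable in dimension $2$, which gives the partially separable family. Otherwise the origin lies in the interior of a genuine $3$-polytope $\mathcal{C}$, and each $2$-face, read in its supporting plane through the origin, obeys the dimension $2$ classification just established. I would then enumerate the combinatorial types of $3$-polytopes obtained by gluing admissible $2$-faces along common edges, and in each case propagate the length ratios around every vertex and every face to check consistency. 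The surviving polytopes, together with the ray-content restrictions, correspond exactly to the three listed three-dimensional families.

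The main obstacle is this $3$-dimensional enumeration: at every vertex of $\mathcal{C}$ several edges meet, each prescribing a length ratio between its endpoints, and these ratios must multiply to unity around every closed $2$-face. Solving these coupled multiplicative constraints rules out almost all a priori combinatorial types. I would carry this out by fixing a single $2$-face of $\mathcal{C}$ at the outset and then successively attaching admissible neighbouring faces edge by edge, pruning every branch where the length ratio imposed by one side disagrees with that imposed by the other, until only the three surviving polytopes remain.
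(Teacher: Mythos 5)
Your overall strategy is the same as the paper's: project the summits of $\mathcal{C}(V)$ onto the unit sphere and enumerate the configurations allowed by Theorem \ref{thmmain}. The dimension~$2$ part essentially reproduces the paper's Proposition \ref{propdim2} (your four angle multisets summing to $2\pi$ are correct, and the cases involving an arc of length $\pi$ that you omit are all separable). One local error there: your claim that when the origin lies on the boundary of $\mathcal{C}$ or outside it the configuration ``reduces to the separable situation or yields no configuration'' is false --- e.g.\ $e^{i(q_1+\sqrt{3}q_2)}+e^{i(q_1-\sqrt{3}q_2)}$ is integrable, non-separable, and has $0\notin\mathcal{C}$. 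The paper handles this via Lemma \ref{lem1}: a partial covering of the circle of length $<\pi$ always completes to one of the full coverings, so these cases are specialisations of the listed families. Your earlier remark about specialisations saves the conclusion, but the sentence as written is wrong.

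The genuine gap is in dimension~$3$. First, your reduction ``each $2$-face, read in its supporting plane through the origin, obeys the dimension~$2$ classification just established'' does not make sense: the supporting plane of a $2$-face of $\mathcal{C}$ does not pass through the origin, and the angles of Theorem \ref{thmmain} are measured from the origin of the lattice, not from a point of the face. The correct statement is that each face projects radially to a convex spherical polygon whose edge lengths lie in $\{\pi/2,2\pi/3,3\pi/4,5\pi/6,\pi\}$; its perimeter is at most $2\pi$ (not $2\pi$ exactly, so the $2$-dimensional sum-to-$2\pi$ enumeration is irrelevant here), and since each edge is at least $\pi/2$ the pieces are forced to be spherical \emph{triangles} from a list of six. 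Second, your proposed enumeration (``attach admissible faces edge by edge and prune'') is not shown to terminate; the paper makes it finite by an area bound (each triangle has area at least $\pi/2$, so at most $8$ pieces tile $\mathbb{S}_2$), followed by parity and gluing checks that eliminate two spurious combinations. Without the triangle reduction and the area bound, your plan is not yet a proof. Finally, note that the paper completes the classification by exhibiting explicit commuting first integrals for each surviving family (Section 5); this is not needed for the literal ``only if'' statement of the corollary, but your proposal should at least carry the ratio and ray-content conditions of Theorem \ref{thmmain} through to the explicit exponential forms listed, which you only gesture at.
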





These non separable potentials are integrable but with high degree first integrals (degree more than $2$ and up to $6$ in $p_i$). Most of cases in dimension $2$ were already found by Hietarinta \cite{5} and the third in \cite{18}. The first non partially separable case in dimension $3$ is related to an integrable Toda potential \cite{1}, the other ones could be recovered through building convenient Dynkin diagrams following \cite{17}, with the third case found in Ranada \cite{19}. Remark that none of the non separable integrable potential are real, as the condition $\bar{a}_k=a_{-k}$ would imply $V=0$. However if the coefficients $a,b,c,d$ are chosen real, multiplying all the coordinates by $i$ produce real valued potential, which is a finite sum of real exponentials. In the computation, the potential
\begin{equation}\label{eqpotnonint}
V(q_1,q_2)= ae^{6iq_1}+be^{4iq_1}+ce^{2iq_1}+de^{2i\sqrt{3}q_1}+ee^{-3iq_1-\sqrt{3}iq_2}
\end{equation}
although proved to be non integrable, distinguished itself by passing all integrability tests of Theorem \ref{thmmain} except condition $4$, and the perturbation analysis in section 2.6 made appear it more regular than the others. This suggests that it could admit a first integral not rational but with simple ramification structure, as a Darbouxian first integral.\\

The corollaries are proven analysing all the possible configurations of the semi-straight lines generated by the $v_i$'s. In Corollary \ref{cor1}, the real assumption implies that the support of the Fourier transform of $V$ is invariant by central symmetry, and thus so is $\mathcal{C}$ and the summits $v_i$. This allows to tremendously reduce the possibilities, allowing to complete the proof in any dimension, see section $3$. In corollary \ref{cor2}, all possible configurations have to be analysed. This comes down to study polygonal tessellations of the sphere with particular edge length, and then reduces to a combinatorial problem. The number of possible combinations seems to grow exponentially with the dimension, and this is why we do not to look in dimension higher than $3$, although this could be done in principle. This analysis is done in section $4$. Once the possible sets of $v_i$'s are found, to conclude the proof of Corollary \ref{cor2}, we finally have to exhibit the first integrals of the resulting potentials, which is done in section $5$.

\section{Integrability conditions}

\subsection{Translation limits}

Let us first remark that the integrability status of a potential $V$ is invariant under some transformations: rotation, dilatation, translation, symmetry, multiplication of $V$ by a constant (which corresponds to a time change). This implies in particular that given an integrable potential, we can compute a whole family of potentials with several parameters. The integrable potentials are thus commonly classified up to these transformations. However, here we want to use these transformations to define limit potentials

\begin{defi}
Let $V$ be a trigonometric polynomial potential, and $v\in\mathbb{R}^n_*$ a non zero vector. The limit of the potential $V$ along the vector $v$ is defined the first non zero term of the series $V(q-i\ln \alpha v)$ at $\alpha=\infty$.
\end{defi}

\begin{prop}\label{proplimit}
Let $V$ be a trigonometric polynomial potential, $v\in\mathbb{R}^n_*$ a non zero vector, $P_\alpha$ the normal plane to $v$ with $\alpha v \in P_\alpha$ and
$$\alpha_0=\max\limits_{\alpha\in\mathbb{R}} \{\alpha,\; P_\alpha \cap \mathcal{S}(V)\neq \emptyset\}.$$
Then
$$V_v=\sum\limits_{k\in\mathcal{L}^*\cap P_{\alpha_0}} a_ke^{ik.q}$$
\end{prop}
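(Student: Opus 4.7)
The plan is to substitute $q\mapsto q-i\ln\alpha\,v$ directly into the Fourier expansion of $V$ and then read off the leading term at $\alpha=\infty$. Using the finite expansion $V(q)=\sum_{k\in\mathcal{S}(V)} a_k e^{ik\cdot q}$, a direct calculation gives
\begin{align*}
V(q-i\ln\alpha\,v)=\sum_{k\in\mathcal{S}(V)} a_k\,e^{ik\cdot q}\,e^{(k\cdot v)\ln\alpha}=\sum_{k\in\mathcal{S}(V)} a_k\,\alpha^{k\cdot v}\,e^{ik\cdot q},
\end{align*}
so that each Fourier mode contributes a single (real) power of $\alpha$, with exponent $k\cdot v$.

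Next I would group the finitely many terms by the value of $k\cdot v$ and identify the leading behaviour. Writing
\begin{align*}
V(q-i\ln\alpha\,v)=\sum_{\beta}\alpha^{\beta}\Bigl(\sum_{\substack{k\in\mathcal{S}(V)\\ k\cdot v=\beta}} a_k\,e^{ik\cdot q}\Bigr),
\end{align*}
the first non-zero term at $\alpha=\infty$ is the coefficient of the maximal exponent $\beta_0=\max_{k\in\mathcal{S}(V)}k\cdot v$. Since the affine hyperplane $P_\alpha$ is characterized by $x\cdot v=\alpha\lVert v\rVert^2$, the condition $k\in P_\alpha$ is equivalent to $k\cdot v=\alpha\lVert v\rVert^2$. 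Hence the maximizing $\alpha_0$ in the statement satisfies $\alpha_0\lVert v\rVert^2=\beta_0$, and the $k$'s realizing the maximum are precisely the elements of $\mathcal{S}(V)\cap P_{\alpha_0}$. Replacing $\mathcal{S}(V)$ by $\mathcal{L}^*$ is harmless since $a_k=0$ off $\mathcal{S}(V)$, which yields the stated formula for $V_v$.

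The only real subtlety is interpretational: one must confirm that the ``first non-zero term of the series at $\alpha=\infty$'' in the definition matches the coefficient of the largest power of $\alpha$ in the finite Hahn-type expansion above. This is unambiguous because the finite set of real exponents $\{k\cdot v:k\in\mathcal{S}(V)\}$ admits a well-defined maximum, and grouping the sum accordingly produces a genuine asymptotic expansion at $\alpha=\infty$ whose leading coefficient is a trigonometric polynomial in $q$. There is no deeper obstacle; the proposition is essentially the bookkeeping of how the complex shift $q\mapsto q-i\ln\alpha\,v$ acts on Fourier data, combined with the definition of $\alpha_0$ as the $v$-support function of $\mathcal{S}(V)$.
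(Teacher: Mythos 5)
Your proposal is correct and follows essentially the same route as the paper: substitute $q\mapsto q-i\ln\alpha\,v$ into the Fourier expansion to get $\sum_k a_k\alpha^{k\cdot v}e^{ik\cdot q}$, then identify the leading term at $\alpha=\infty$ with the frequencies maximizing $k\cdot v$, which are exactly $\mathcal{S}(V)\cap P_{\alpha_0}$. Your explicit identification $k\in P_\alpha\iff k\cdot v=\alpha\lVert v\rVert^2$ is a slightly more careful version of the paper's argument, but the substance is identical.
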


\begin{proof}
Let us write down 
\begin{align*}
V(q-i\ln \alpha v) & =  \sum\limits_{k\in\mathcal{L}^*} a_ke^{ik.(q-i\ln\alpha v)}\\
 & =  \sum\limits_{k\in\mathcal{L}^*} a_k\alpha^{k.v}e^{ik.q}
\end{align*}
So the dominant terms in the series are those with maximal $\alpha^{k.v}$ with $a_k\neq 0$. The support $\mathcal{S}(V)$ is exactly the set of non zero $a_k$. The plane $P_\alpha$ contains all vectors $k$ such that $k.v$ equal to a specific value. And the $\alpha_0$ has been chosen such that $P_{\alpha_0} \cap \mathcal{S}(V)$ is non empty, with $\alpha_0$ as large as possible. Thus $P_{\alpha_0} \cap \mathcal{S}(V)$ exactly contains the $k$ maximizing the scalar product $k.v$ and non empty. This gives us the Proposition.
\end{proof}

\begin{prop}\label{propint}
If $V$ is integrable, then so is $V_v$.
\end{prop}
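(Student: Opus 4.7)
The plan is to view the operation $V \mapsto V_v$ as the limit of a one-parameter family of integrability-preserving transformations. The translation $q \mapsto q - i\ln\alpha\, v$ is canonical, so applying it to $H$ and to each first integral $F_j$ (with $F_1 = H$) produces an integrable Hamiltonian
\[
\tilde H_\alpha(p,q) = \frac{1}{2}\sum_{i=1}^n p_i^2 + V(q - i\ln\alpha\, v)
\]
with Poisson-commuting, functionally independent first integrals $\tilde F_j(p, q; \alpha) = F_j(p, q - i\ln\alpha\, v)$. Performing the momentum dilation $p = \alpha^{\alpha_0/2} P$ and dividing by $\alpha^{\alpha_0}$ (both preserve Poisson commutation up to a global $\alpha$-factor, since $\mathbb{C}(\alpha)$ lies in the Poisson center) yields the family
\[
H_\alpha(P, q) = \frac{1}{2}\sum_{i=1}^n P_i^2 + \sum_k a_k \alpha^{k\cdot v - \alpha_0} e^{ik\cdot q},
\]
which by Proposition~\ref{proplimit} converges to $H_v = \frac{1}{2}\sum_{i=1}^n P_i^2 + V_v(q)$ as $\alpha \to \infty$.

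Each rescaled first integral, expressed in $(P, q; \alpha)$, is a rational function in (a fractional power of) $\alpha$ with coefficients in the differential field $K$. Factoring out the dominant $\alpha$-power, one writes $\tilde F_j = \alpha^{\gamma_j}\bigl(F^v_j(P,q) + o(1)\bigr)$ with $F^v_j \in K$ nonzero. The identities $\{\tilde F_j, \tilde F_k\} = 0$ and $\{\tilde F_j, \tilde H_\alpha\} = 0$ hold as identities of Laurent series in $\alpha$, and equating the leading order yields $\{F^v_j, F^v_k\} = 0$ and $\{F^v_j, H_v\} = 0$. This already produces $n$ pairwise Poisson-commuting candidate first integrals of $H_v$.

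The main obstacle is functional independence of the $F^v_j$: leading terms do not in general inherit independence, as shown by $\tilde F_1 = P_1 + \alpha^{-1} P_2$ and $\tilde F_2 = P_1 + 2\alpha^{-1} P_2$, which are independent for every $\alpha$ but share leading term $P_1$. The remedy is a normalization step performed before extracting leading terms. Within the Poisson-commutative $\mathbb{C}(\alpha)$-algebra generated by $\tilde F_1, \dots, \tilde F_n$, an Abhyankar-type inequality for the $\alpha$-adic valuation produces $n$ algebraically independent elements whose leading terms in $K$ are algebraically independent over $\mathbb{C}$: concretely, one picks elements of smallest $\alpha$-valuation successively, and whenever the new leading term is algebraically dependent on the previous ones, one subtracts the corresponding combination and rescales by the necessary power of $\alpha$ (so in the above example $(\tilde F_1, \tilde F_2)$ is replaced by $(\tilde F_1, \alpha(\tilde F_2 - \tilde F_1))$ with leading terms $(P_1, P_2)$). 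Since $\mathbb{C}(\alpha)$ is Poisson-central, the selected elements still pairwise Poisson-commute, and their leading terms deliver $n$ functionally independent Poisson-commuting first integrals of $H_v$, establishing the integrability of $V_v$.
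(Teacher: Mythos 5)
Your proposal follows essentially the same route as the paper: apply the translation $\varphi_\alpha$ together with a momentum rescaling, note that Poisson commutation survives passage to the leading order in $\alpha$ at $\alpha=\infty$, and then repair the possible loss of functional independence of the leading terms by a normalization of the first integrals. The one substantive difference is how that last step is justified. The paper delegates it to Ziglin's lemma (cited, not reproved): if $F_1^0,\dots,F_{j-1}^0$ are independent, some polynomial in $F_1,\dots,F_j$ has a leading part independent of them. Your inline sketch of this is where the gap sits. Algebraic dependence of the new leading term $F^v_j$ on $F^v_1,\dots,F^v_{j-1}$ only gives a polynomial relation $P(F^v_1,\dots,F^v_j)=0$; it does not in general express $F^v_j$ as a combination of the earlier leading terms that one could ``subtract'', so the replacement step you describe is not well defined beyond the linear example you give. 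The correct move is to pass to $P(\tilde F_1,\dots,\tilde F_j)$, whose $\alpha$-valuation drops precisely because $P$ annihilates the leading parts, and iterate; one must then prove that this iteration terminates with a leading term independent of the previous ones. That termination argument is the actual content of Ziglin's lemma and is absent from your proof. Everything else --- the canonicity of the translation, the harmlessness of rescaling by powers of $\alpha$ since $\mathbb{C}(\alpha)$ is Poisson-central, and the commutation of the leading terms --- is sound and matches the paper's proof.
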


\begin{proof}
Let us note $e_j=\exp(i\omega_j.q),\;j=1\dots n$ where $\omega_1,\dots,\omega_n$ is a basis of $\mathcal{L}$. Now the Hamiltonian is a rational function in $p_j,e_j$. The transformation
$$\varphi_\alpha:q\rightarrow q-i\ln\alpha v$$
acts on the $e_j$ by
$$e_j \rightarrow \alpha^{\omega_j.v} e_j$$
The limit potential $V_v$ is the weight homogeneous part of maximal degree of $V(\varphi_\alpha(q))$. Let us note $m$ the total homogeneity degree of $V_v$. The Hamiltonian
$$\frac{1}{2}\sum\limits_{i=1}^n p_i^2+V_v(q)$$
is simply the weight homogeneous part of maximal degree of $H(\alpha^{m/2}p,\varphi_\alpha(q))$, which is rational in $p_i,e_i$.

Let us now consider the additional independent commuting first integrals of $V$, noting them $F_2,\dots,F_n$, and noting $F_1=H$. These functions belong to $K$ by assumption, and so are rational in $p_i,e_i$. Let us note $F_j^0$ the weight homogeneous part of maximal degree of $F_j(\alpha^{m/2}p,\varphi_\alpha(q))$. These are rational in $p_i,e_i$. As the $F_j^0$ are also the first non zero term in the series expansion of $F_j(\alpha^{m/2}p,\varphi_\alpha(q))$ at $\alpha=\infty$, we still have $\{F_i^0,F_j^0\}=0,\; \forall i,j$. However the independence property is no longer guaranteed.

We now use the Ziglin Lemma \cite{6}. If $F_1^0,\dots,F_{j-1}^0$ are functionally independent, then there exists a polynomial in $F_1,\dots,F_j$ such that its weight homogeneous part of maximal degree is independent with $F_1^0,\dots,F_{j-1}^0$. This Lemma can be applied recursively on the first integrals $F_1,\dots,F_n$, ensuring us to obtain $F_1^0\dots,F_n^0$ commuting first integrals with $F_1^0=\frac{1}{2}\sum\limits_{i=1}^n p_i^2+V_v(q)$ and functionally independent. This implies integrability of $V_v$.
\end{proof}

The Proposition \ref{propint} will be useful for us in the opposite way. We will try to prove that a limit $V_v$ for a suitable $v$ is not integrable, thus proving that $V$ is not integrable. From now on, we will note $\mathcal{S}(V)$ the support of the Fourrier transform $\hat{V}$ of $V$, and $\mathcal{C}(V)$ its convex hull.

Using these limit processes, we can build many integrable limit potential from one integrable potential. The convex set $\mathcal{C}(V)$ is always a polyhedron in $\mathbb{R}^n$, as it is the convex hull of a finite subset of $\mathcal{L}$. By choosing well the vector $v$, we can produce a limit potential $V_v$ containing exactly the terms $a_k$ with $k$ belonging to a summit, an edge, a face or a $p$-th dimensional face. We will now study integrability of these limit potentials.

\subsection{Angular conditions between summits}

Let us begin the the following Lemma

\begin{lem}\label{lemint}
Let us consider an integrable trigonometric polynomial potential $V$, with Hamiltonian $H$ and some function $J\in K$. Let us consider a generic orbit $\Gamma$. If the equation $\{I,H\}=J$ admits a solution in $K$, then
$$\int J(\Gamma(t)) dt \in K_{\mid_\Gamma}$$
where $K_{\mid_\Gamma}$ the differential field of functions in $K$ restricted on $\Gamma$.
\end{lem}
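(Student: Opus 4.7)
The plan is to exploit the fundamental identity of Hamiltonian mechanics: for any function $F$ on phase space, the Poisson bracket $\{F,H\}$ evaluated along a trajectory $\Gamma(t)$ of the Hamiltonian vector field $X_H$ coincides with the time derivative $\frac{d}{dt}F(\Gamma(t))$. Given the hypothesis that $I\in K$ satisfies $\{I,H\}=J$, this identity immediately turns $I_{\mid_\Gamma}$ into an antiderivative of $J_{\mid_\Gamma}$, and the lemma becomes almost tautological.

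Concretely, I would proceed in the following steps. First, recall that $\frac{d}{dt}F(\Gamma(t))=\{F,H\}(\Gamma(t))$ for any $F\in K$, which follows directly from Hamilton's equations $\dot q_i = \partial_{p_i}H,\ \dot p_i=-\partial_{q_i}H$ and the definition of the Poisson bracket. Second, specialize to $F=I$ and apply the hypothesis to obtain $\frac{d}{dt}I(\Gamma(t))=J(\Gamma(t))$ along $\Gamma$. Third, integrate with respect to $t$ to produce $\int J(\Gamma(t))\,dt = I(\Gamma(t))+c$ for some integration constant $c$. Finally, observe that since $I\in K$, the restriction $I_{\mid_\Gamma}$ is by definition an element of $K_{\mid_\Gamma}$, and adding a constant does not leave this field; this yields the desired conclusion.

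The only point requiring attention is the interpretation of the word \emph{generic}. The element $I\in K$ is a rational function in $(p_j, e^{i\omega_j.q})$, hence may have poles along a proper algebraic subvariety of phase space, and on exceptional orbits entirely contained in this pole locus the restriction $I_{\mid_\Gamma}$ would be ill-defined. For a generic orbit, however, $\Gamma$ meets the pole locus only in an isolated set of times, so that $I_{\mid_\Gamma}$ is a legitimate element of the differential field $K_{\mid_\Gamma}$. This is essentially the only technical issue; no deeper obstruction is expected, and the argument should fit in a few lines.
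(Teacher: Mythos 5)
Your argument is correct and is essentially identical to the paper's own proof: both reduce $\{I,H\}=J$ along $\Gamma$ to $\dot I = J$ via Hamilton's equations and conclude that $I_{\mid_\Gamma}\in K_{\mid_\Gamma}$ is the required antiderivative, with the genericity of $\Gamma$ used exactly as you describe to avoid the codimension-one pole locus of $I$. Nothing further is needed.
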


\begin{proof}
If $I$ is in $K$, then evaluating it on $\Gamma$ gives $I(\Gamma(t))\in K_{\mid_\Gamma}$ if $\Gamma$ is not in the pole set $\Sigma$ of $I$. This is generically the case, as this pole set is a codimension $1$ algebraic set in $(p_j,\exp(\omega_j.q))_{j=1\dots n}$. So now on $\Gamma$, the equation $\{I,H\}=J$ becomes $\dot{I}=J$, which gives us the Lemma.
\end{proof}

Remark that the problematic orbits $\Gamma$ are the poles of $I$. These are either the poles of $J$ (such a case could be seen immediately when computing $J(\Gamma(t))$), or the poles of $I$ which are not a poles of $J$. This is possible, but only in the case the pole is an invariant set of codimension $1$, i.e. $I$ has a Darboux function in its denominator.

\begin{lem}\label{lem2}
The only integrable potentials of the form
$$V=e^{iq_1} \sum\limits_{j \hbox{ finite}} a_j e^{ijq_2}$$
are up to symmetry $q_2\rightarrow -q_2$
$$V=a e^{i(q_1+kq_2)}+b e^{i(q_1-1/kq_2)} \quad V=a e^{i(q_1+\sqrt{3} q_2)}+b e^{i(q_1-\sqrt{3} q_2)} $$
$$V=a e^{i(q_1+3 q_2)}+b e^{i(q_1-2 q_2)} \quad V=a e^{i(q_1+3\sqrt{3} q_2)}+b e^{i(q_1-\frac{5}{3}\sqrt{3} q_2)} $$
\end{lem}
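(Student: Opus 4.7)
The plan is to reduce the lemma to a direct application of Theorem~\ref{thmmain}, exploiting the very restrictive geometric assumption that every frequency of $V$ lies in the affine line $L = \{k \in \mathbb{R}^2 : k_1 = 1\}$. The support $\mathcal{S}(V)$ is thus contained in $L$, and hence its convex hull $\mathcal{C}(V)$ is a segment whose two summits are $v_1 = (1, j_{\min})$ and $v_2 = (1, j_{\max})$, where $j_{\min}$ and $j_{\max}$ are the extreme indices with $a_j \neq 0$.

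The key observation is that the positive ray $\mathbb{R}_+ v_i = \{(t, t j_i) : t > 0\}$ emanating from the origin through a summit meets $L$ only at $v_i$ itself. Consequently, the first bullet of Theorem~\ref{thmmain} forces $\mathcal{S}(V) \subseteq \{v_1, v_2\}$, so any integrable $V$ of the prescribed shape is a sum of at most two exponentials, the case of a single exponential being the degenerate $b = 0$ instance of the first listed family.

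It then remains to carry out a short case analysis on the pair $(v_1, v_2)$. Since both summits have positive first coordinate, the angle $\widehat{v_1 v_2}$ is neither $0$ (which would force $v_1 = v_2$) nor $\pi$ (which would force opposite signs of the first coordinate), so bullet 2 of Theorem~\ref{thmmain} leaves the four values $\pi/2, 2\pi/3, 3\pi/4, 5\pi/6$. For each angle, writing $v_i = (1, j_i)$ and translating the cosine formula and the norm ratio of bullet 3 into a pair of algebraic equations in $(j_1, j_2)$ yields a small system: the orthogonal case $1 + j_1 j_2 = 0$ gives the one-parameter family $j_2 = -1/j_1$, while the three other angles each pin down $\{j_1, j_2\}$ uniquely up to the $q_2 \mapsto -q_2$ symmetry, namely $\{\sqrt{3}, -\sqrt{3}\}$, $\{3, -2\}$, and $\{3\sqrt{3}, -5\sqrt{3}/3\}$. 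Bullets 4 and 5 of Theorem~\ref{thmmain} are automatically fulfilled since $\mathcal{S}(V)$ contains at most one point on each ray $\mathbb{R}_+ v_i$; in particular, the extra candidate $v_i/2$ allowed in bullet 5 for angle $3\pi/4$ has first coordinate $1/2$ and therefore cannot lie in $L$, so it does not enlarge the support.

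Nothing conceptually difficult arises: the only real substance is the elementary remark that an affine line missing the origin intersects each half-line through one of its points exactly once, which collapses the problem to at most two frequencies. The algebra of the case analysis amounts to solving a single quadratic in $j_i^2$ per angle and discarding negative roots, and no further integrability machinery beyond Theorem~\ref{thmmain} is needed.
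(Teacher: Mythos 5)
Your argument is circular: you derive Lemma~\ref{lem2} from Theorem~\ref{thmmain}, but in this paper Theorem~\ref{thmmain} is itself a consequence of Lemma~\ref{lem2}. The second and third bullets of Theorem~\ref{thmmain} (the list of admissible angles $\pi/2,2\pi/3,3\pi/4,5\pi/6$ and the norm ratios $1,2^{\pm1/2},3^{\pm1/2}$) are obtained in Section~2.7 precisely by reducing an edge of $\mathcal{C}(V)$ to a potential of the form treated in Lemma~\ref{lem2}; likewise the first bullet rests on Lemma~\ref{lemsummit}, whose proof opens by invoking Lemma~\ref{lem2}. So every fact you quote from Theorem~\ref{thmmain} is, at this point of the paper, exactly what remains to be proved. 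Your geometric reductions (the support lies on the affine line $k_1=1$, a ray through the origin meets that line once, hence at most two frequencies survive; the cosine formula pins down $j_1,j_2$ for each angle) are correct as bookkeeping, but they contain none of the analytic content of the lemma.

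The actual substance, entirely absent from your proposal, is the integrability obstruction that produces the angle list in the first place. The paper's proof takes the translation limit to isolate the dominant term $e^{iq_1+ikq_2}$, observes that all rational first integrals of that limit system in $K$ are generated by $H_0$ and $kp_1-p_2$, applies Ziglin's lemma to normalize the leading part of the extra first integral to $I_0=kp_1-p_2$, and then examines the next order of the cohomological equation $\{I_0,ae^{iq_1+isq_2}\}+\{H_0,I_1\}=0$. Evaluating on an explicitly integrated generic orbit and demanding (via Lemma~\ref{lemint} and the exponential-field integration Lemma~\ref{lemint2}) that the resulting hypergeometric-type antiderivative stay in $K_{\mid_\Gamma}$ yields the condition $\frac{ks+1}{k^2+1}\in-\frac{1}{2}\mathbb{N}$; imposing it symmetrically under $q_2\mapsto-q_2$ both rules out three or more terms and forces the four listed pairs $(k,s)$. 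If you want a non-circular proof you must carry out this perturbative, differential-Galois style computation (or an equivalent obstruction argument); Theorem~\ref{thmmain} cannot be used as an input here.
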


Let us remark that the angle between the two frequencies of the integrable cases are respectively $\pi/2,2\pi/3,3\pi/4,5\pi/6$.

\begin{proof}
Let us first remark that if the sum contains only one term, the Lemma is trivially satisfied, so we exclude this case from now on. Let us note $k=\max_{j\in\mathbb{Z}} \{j,\;a_j\neq 0\}$, and $s=\max_{j\in\mathbb{Z},\;j<k} \{j,\;a_j\neq 0\}$. Let us remark moreover that we can assume $k\neq 0$ by doing the symmetry $q_2\rightarrow -q_2$. We now use the transformation $\varphi_\alpha:q\rightarrow q-i\ln\alpha (0,1,0\dots,0)$. The weight homogeneous part of maximal degree of $V(\varphi_\alpha(q))$ is now $\tilde{V}=\exp(iq_1+ikq_2)$.

The potential $\tilde{V}$ admits the first integrals $H_0,kp_1-p_2,p_3,\dots,p_n$ where
$$H_0=\frac{1}{2}\sum\limits_{j=1}^n p_j^2 + e^{iq_1+ikq_2}.$$
All the rational first integrals of $\tilde{V}$ in $K$ are rational combinations of these ones. We now consider the next term in the series expansion in $\alpha$ of $V(\varphi_\alpha(q))$, which gives
$$H(\alpha^{(k+1)/2}p,\varphi_\alpha(q))=\alpha^k\left(\frac{1}{2}\sum\limits_{j=1}^n p_j^2 + e^{iq_1+ikq_2}\right)+ a\alpha^se^{iq_1+isq_2}+o(\alpha^s)$$
As the Hamiltonian $H$ is integrable, there exist a function $I\in K$ such that
\begin{equation}\label{eqintseries}
\{I(\alpha^{(k+1)/2}p,\varphi_\alpha(q)),H(\alpha^{(k+1)/2}p,\varphi_\alpha(q))\}=0\qquad \forall \alpha\in\mathbb{C}
\end{equation}
Looking at the first term in the series expansion at $\alpha=\infty$ of this expression, we have $\{I_0,H_0\}=0$ where $I_0$ is the weight homogeneous part of maximal degree of $I$. So $I_0$ is a first integral of $\tilde{V}$. As the Hamiltonian $H$ is invariant by translation in $q_j,\;j=3\dots n$, we can assume up to algebraic combination between $I$ and $H,(p_j)_{j=3\dots n}$ that $I$ does not depend on $(q_j,p_j)_{j=3\dots n}$. The $I_0$ is a first integral of $H_0$, and using Ziglin Lemma, we can assume that $I_0,H_0$ are functionally independent. Recalling that $I_0$ has to be a function of $kp_1-p_2,H_0$, up to algebraic transformations, we can assume that $I_0=kp_1-p_2$.

Let us now look at the second term of series of \eqref{eqintseries}. We have
$$\{I_0,ae^{iq_1+isq_2}\}+\{H_0,I_1\}=0$$
where $I_1$ is the next term in the series expansion of $I(\alpha^{(k+1)/2}p,\varphi_\alpha(q))$ at $\alpha=\infty$. This equation is of the type of Lemma \ref{lemint}. The only unknown is $I_1\in K$.

To apply Lemma \ref{lemint}, we first need to integrate the potential $\tilde{V}$, as we need a generic solution. The variables $q_j,p_j,\;j=3\dots n$ can be forgotten as the do not appear either in $H$ nor $I$. Let us note
$$\mathcal{M}_{\lambda,\mu}= \left\lbrace\frac{1}{2}(p_1^2+p_2^2)+e^{iq_1+ikq_2}=\frac{k^2\mu^2+\lambda^2}{2(k^2+1)},\;kp_1-p_2=\mu k\right\rbrace$$
This manifold is an invariant manifold. This manifold contains a generic orbit $\Gamma$ for a generic $\lambda,\mu$ as $k\in\mathbb{R}^*$. We now introduce the variables
$$e_1=e^{iq_1},\;\;e_2=e^{i\frac{k^2+1}{k}q_2}$$
We now parametrize the manifold $\mathcal{M}_{\lambda,\mu}$ by $p_1,e_2$
$$e_1=-\frac{k^4\mu^2-2k^4\mu p_1+k^4 p_1^2-2k^2\mu p_1+2k^2 p_1^2-\lambda^2+p_1^2}{2e_2^{\frac{k^2}{1+k^2}}(k^2+1)},\quad p_2=kp_1-k\mu.$$
On the manifold, there are two commuting vector fields, which lead to two rational closed $1$-forms whose integral $f_1,f_2$ are such that
$$L_{X_{H_0}} f_1=0,\; L_{X_{I_0}} f_1=1,\; L_{X_{H_0}} f_2=1,\; L_{X_{I_0}} f_2=0$$
Computing this forms and integrating them leads to
$$f_1=-\frac{ik}{k^2+1}\ln e_2+\frac{(\lambda-\mu)ik}{\lambda(k^2+1)} \ln((p_1-\mu)k^2-\lambda+p_1) +\frac{(\lambda+\mu)ik}{\lambda(k^2+1)}\ln((p_1-\mu)k^2+\lambda+p_1)$$
$$f_2=\frac{i}{\lambda} \ln((p_1-\mu)k^2-\lambda+p_1)-\frac{i}{\lambda}\ln((p_1-\mu)k^2+\lambda+p_1)$$
From this, we obtain the expression in time of a solution in $\mathcal{M}_{\lambda,\mu}$
$$e_2= \frac{4e^{it\lambda+i\mu t}\lambda^2}{(e^{it\lambda}-1)^2},\quad p_1 = \frac{e^{it\lambda}k^2\mu-k^2\mu+e^{it\lambda}\lambda+\lambda}{e^{it\lambda}k^2-k^2+e^{it\lambda}-1}$$
Remark that if $\lambda/\mu \notin \mathbb{Q}$ this solution is generic as almost all other solutions (so except the solutions in the singular sets of $f_1,f_2$) can be obtained by multiplying $e^{i\lambda t}, e^{i\mu t}$ by two non zero constants which is the Galois group action on this solution.

Let us now evaluate
$$\{H_0,I_1\}=\{ae^{iq_1+isq_2},kp_1-p_2\}$$
on such a solution. The righthandside evaluated on $\Gamma$ gives
\begin{equation}\label{eq1}
\tilde{a} \left( \frac{e^{it(\lambda+\mu)}}{(e^{it\lambda}-1)^2}\right)^{\frac{ks+1}{k^2+1}} e^{-i\mu t}
\end{equation}
with $\tilde{a}$ an always non zero constant multiple of $a$. We now try to integrate this expression in $K_{\mid_\Gamma}$, which is here
$$\mathbb{C}\left(\left( \frac{e^{it(\lambda+\mu)}}{(e^{it\lambda}-1)^2}\right)^{\frac{ks+1}{k^2+1}},e^{i\lambda t},e^{i\mu t}\right).$$
In the general case, this can be written with a hypergeometric function in $e^{i\lambda t},e^{i\mu t}$. This hypergeometric function becomes rational if and only if
\begin{equation}\label{eqtre}
\frac{ks+1}{k^2+1} \in -\frac{1}{2}\mathbb{N}.
\end{equation}
And it appears that in this case, the integral is indeed in $K_{\mid_\Gamma}$. We thus obtain
$$s=-\frac{k^2n+n+2}{2k},\;\;n\in\mathbb{N}$$
Now we remark that the angle between $(1,k)$ and $(1,s)$ is always larger than $\pi/2$. Applying our condition after making the symmetry $q_2\rightarrow -q_2$ (remark that the case $k=0$ or $s=0$ is now forbidden by our condition), we obtain that if $V$ has $3$ terms or more, then the angular separation between the extreme frequency vectors should be larger than $\pi$. This is impossible as they all are on the line $(1,y),\;y\in\mathbb{R}$. So $V$ has exactly $2$ terms. Applying our integrability condition simultaneously on $V$ and $V(q_1,-q_2)$ gives
$$\frac{ks+1}{k^2+1}= -\frac{n}{2},\quad \frac{ks+1}{s^2+1}=-\frac{m}{2},\quad n,m\in\mathbb{N}$$
We find that if this system admits a solution with $k\in\mathbb{R}$ then $nm<4$. We then find all the solutions
$$(k,-1/k),\;(\sqrt{3},-\sqrt{3}),\;(3,-2),\;(3\sqrt{3},-\frac{5}{3}\sqrt{3})$$
\end{proof}

The integrability condition \eqref{eqtre} is exactly the one given in \cite{17}, but now only assuming rational first integrals. Remark that a key argument in the proof is that $\exp(iq_1+ikq_2)$ is not superintegrable. Typically we have to consider the angular momenta, and so many other first integrals exist outside those in $p$ only. However, here we have to recall that the field $K$ consist only of $\mathcal{L}$-multiperiodic functions in $q$ and rational in $p$. The angular momenta are $p_iq_j-p_jq_i$ and none of their combinations (which has to be rational as the result has to be rational in $p$) produces a $\mathcal{L}$-multiperiodic function in $q$.

Remark also that the genericness of $\Gamma$ is important. In the function \eqref{eq1}, if the value $\mu=0$ is chosen, then for $s=-1/k$, the function \eqref{eq1} becomes a constant, and so its integral $\tilde{a}t$ is not in $K_{\mid_\Gamma}$. This seems to be related to the fact the perturbed system admits a first integral, but of higher degree in $p$.

\subsection{Integration in exponential fields}

In the next proofs, we will need to compute integrals of exponential functions, more precisely, of functions of the form
$$f(t)=\prod\limits_{j=1}^m e^{i\lambda_j\beta_j t} P(e^{i\lambda_1t},\dots,e^{i\lambda_mt}) \prod\limits_{j=1}^n Q_j(e^{i\lambda_1t},\dots,e^{i\lambda_mt})^{\alpha_j}$$
with $P\in\mathbb{C}[X_1,\dots,X_m]$, $Q_j\in\mathbb{C}[X_1,\dots,X_m]$ distinct unitary irreducible polynomials not monomial, $\alpha_j\in\mathbb{C}\setminus \mathbb{N},\beta_j\in\mathbb{C}$ and $\lambda_1,\dots,\lambda_m$ non resonant (i.e. there are no non zero integer relations between the $\lambda_j$).
Thanks to the non resonance condition, the function $f(t)$ can be uniquely written as a $m$ variable function of the form
$$\tilde{f}(X_1,\dots,X_m)=\prod\limits_{j=1}^m X_j^{\beta_j} P(X_1,\dots,X_m) \prod\limits_{j=1}^n Q_j(X_1,\dots,X_m)^{\alpha_j}$$
evaluated on $(e^{i\lambda_1t},\dots,e^{i\lambda_mt})$. A weighted degree defined on $\mathbb{C}[X_1,\dots,X_m]$ can be extended to a function of the form of $\tilde{f}$ by
$$\deg \tilde{f}= \sum\limits_{j=1}^m \beta_j \deg X_j+\deg P+ \sum\limits_{j=1}^n \alpha_j\deg Q_j.$$ 

\begin{lem}\label{lemint2}
If the function
$$f(t)= \prod\limits_{j=1}^m X_j^{\beta_j} P(e^{i\lambda_1t},\dots,e^{i\lambda_mt}) \prod\limits_{j=1}^n Q_j(e^{i\lambda_1t},\dots,e^{i\lambda_mt})^{\alpha_j}$$
admits an integral in $\mathbb{C}\left(f(t),e^{i\lambda_1t},\dots,e^{i\lambda_mt}\right)$, then
\begin{itemize}
\item We can write
\begin{equation}\label{eqint}
\int f(t) dt= \prod\limits_{j=1}^m e^{i\lambda_j\beta_j t} G(e^{i\lambda_1t},\dots,e^{i\lambda_mt}) \prod\limits_{j=1}^n Q_j(e^{i\lambda_1t},\dots,e^{i\lambda_mt})^{\alpha_j+1}
\end{equation}
with $G\in\mathbb{C}[X_1,\dots,X_m,1/X_1,\dots,1/X_m]$.
\item $\alpha_j\neq -1 \forall j=1\dots n$.
\item For any integer weighted degree (respectively valuation) such that $\deg \tilde{f} \notin \mathbb{Z}$ (respectively $\hbox{val} \tilde{f} \notin \mathbb{Z}$), we have respectively
$$\deg G=\deg P-\sum\limits_{j=1}^n \deg Q_j \qquad \hbox{val} G=\hbox{val} P-\sum\limits_{j=1}^n \hbox{val} Q_j$$
\end{itemize}
\end{lem}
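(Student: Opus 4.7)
The plan is to convert the problem into a first-order linear rational ODE. Via the substitution $X_j = e^{i\lambda_j t}$, non-resonance of the $\lambda_j$ gives an embedding of $L = \mathbb{C}(X_1,\ldots,X_m)$ into functions of $t$, with derivation $D = \sum_j i\lambda_j X_j \partial_{X_j}$; crucially, for every non-monomial irreducible $R \in L$, the polynomials $R$ and $DR$ are coprime, for otherwise $R$ would be $(\lambda_j)$-weighted homogeneous of nonzero total weight, violating non-resonance. Set $u = \prod_j X_j^{\beta_j} \prod_j Q_j^{\alpha_j}$, so that $f = uP$ and $u'/u = \ell = \sum_j i\beta_j\lambda_j + \sum_j \alpha_j DQ_j/Q_j \in L$. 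An element $F \in K = L(u)$ can have no non-monomial rational pole in $u$ (such a pole would generate a higher-order pole in $DF = f$ via $Du = \ell u$), so $F$ is a finite Laurent polynomial in $u$ over $L$. Matching coefficients of $u^k$ in $F' = uP$, the $u^k$-coefficient for $k \neq 1$ satisfies a homogeneous equation whose only $L$-solution is proportional to $u^{-k}$ and so contributes only a constant to $F$. Modulo a constant of integration,
$$F = uH, \qquad H' + \ell H = P, \qquad H \in L.$$

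Next I would analyse $H$ by its poles. If $R$ is a non-monomial irreducible factor of the denominator of $H$ with multiplicity $k \geq 1$, coprimality of $R$ and $DR$ gives $H'$ a pole of order $k+1$ at $R = 0$; the product $\ell H$ reaches order $k+1$ only when $R = Q_{j_0}$, and balancing the leading residues yields $\alpha_{j_0} = k \in \mathbb{N}$, excluded. Hence $H$ is a Laurent polynomial. Then $H$ and $H'$ are regular at $Q_j = 0$ while $\ell H$ has a simple pole there of residue proportional to $\alpha_j H|_{Q_j = 0}$; its vanishing (forced by $P$ being polynomial) yields $Q_j \mid H$ for each $j$, so $H = G \prod_j Q_j$ with $G$ a Laurent polynomial. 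Substituting and dividing by $\prod_j Q_j$ produces
$$G' + CG + G\sum_j(1+\alpha_j)\frac{DQ_j}{Q_j} = \frac{P}{\prod_j Q_j},$$
and the simple pole at $Q_{j_0}=0$ forces $(1+\alpha_{j_0}) G|_{Q_{j_0}=0}\,DQ_{j_0}|_{Q_{j_0}=0}$ to equal $(P/\prod_{k \neq j_0} Q_k)|_{Q_{j_0}=0}$. After absorbing any common $Q_j$-factor into $P$, this right-hand side is nonzero, which forces $\alpha_{j_0} \neq -1$.

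For the degree/valuation assertions, fix an integer weighted degree and extend it formally by $\deg u = \sum_j \beta_j \deg X_j + \sum_j \alpha_j \deg Q_j$, so that $\deg \tilde f = \deg u + \deg P$. The derivation $D$ preserves $\deg$ on monomials, and each summand of $\ell$ is a ratio of polynomials of equal degree, hence $\deg$-homogeneous of degree $0$. Decomposing $H = \sum_d H_d$ into $\deg$-homogeneous components and extracting the top piece of $H' + \ell H = P$: if $\deg H > \deg P$, the top equation reduces to $DH_{\deg H} + \ell_0 H_{\deg H} = 0$, which forces $H_{\deg H}$ to be proportional to the formal element $1/u_0$ of weighted degree $-\deg u$. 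But $H_{\deg H}$ is a Laurent polynomial, so has integer weighted degree, requiring $\deg u \in \mathbb{Z}$ and contradicting $\deg \tilde f \notin \mathbb{Z}$. Hence $\deg H = \deg P$ and $\deg G = \deg P - \sum_j \deg Q_j$. The valuation identity is proved by the symmetric argument on the lowest-degree component.

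The main obstacle is the non-cancellation step in the last paragraph: one must precisely describe the kernel of $h \mapsto h' + \ell h$ on the top (or bottom) weighted-degree component of $H$, show that it is at most one-dimensional and generated by a formal $u^{-1}$-like element, and argue that the assumption $\deg \tilde f \notin \mathbb{Z}$ rules out such an element being a genuine Laurent polynomial. The remaining steps are essentially residue calculations on the rational equation $H' + \ell H = P$, routine once the correct rational reformulation is in place.
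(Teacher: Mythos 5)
Your argument arrives at all three conclusions and is, at bottom, the same Risch-type strategy as the paper's, but the central structural step is carried out differently. The paper obtains the form \eqref{eqint} in one stroke: since at least one exponent is non-integer, the Galois group acts multiplicatively on $\tilde f$ and hence on any antiderivative $\tilde g$ lying in $\mathbb{C}(f,e^{i\lambda_1t},\dots,e^{i\lambda_mt})$, and combined with the observation that the poles and ramification loci of $\tilde g$ off the coordinate hyperplanes are those of $\tilde f$ with order raised by one, this forces $\tilde g=\prod X_j^{\beta_j}\,G\,\prod Q_j^{\alpha_j+1}$; the conditions $\alpha_j\neq-1$ and the degree identity then follow by inspection. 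You instead expand the antiderivative in powers of $u=\prod X_j^{\beta_j}\prod Q_j^{\alpha_j}$, isolate the coefficient of $u$, and solve the resulting linear equation $DH+\ell H=P$ over $L=\mathbb{C}(X_1,\dots,X_m)$ by local analysis at each irreducible divisor. This buys explicitness --- the residue computations yielding $Q_j\mid H$ and $\alpha_j\neq -1$ are completely concrete and closer to an algorithm --- at the cost of length.

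Two caveats. First, the Laurent expansion in $u$ presupposes that $u$ is transcendental over $L$. In the paper's applications the exponents are often rational (e.g.\ $\gamma=1/2,1/3,2/3$ occur in Lemma \ref{leminterior}), so $u$ is algebraic over $L$ and $L(u)$ is a Kummer extension; your decomposition must then be replaced by the decomposition of $F$ into isotypic components under the cyclic Galois group of $L(u)/L$, which is precisely the paper's multiplicative-action argument. The conclusion is unchanged, but this case must be covered. Second, the kernel problem you flag in the top-degree equation is genuine, but the paper's proof carries it in identical form (its claim that $\deg\tilde g\neq 0$ implies $\deg D\tilde g=\deg\tilde g$ rests on the same assertion that the only terms killed by $D$ are genuine constants); your remark that a kernel element would have weighted degree $-\deg u\notin\mathbb{Z}$ while a homogeneous component of a Laurent polynomial has integer degree, together with one-dimensionality of the kernel (the ratio of two kernel elements is a degree-zero $D$-constant, hence a constant by non-resonance), is enough to close it. Finally, as in the paper, $\alpha_j\neq-1$ requires $\gcd(P,Q_j)=1$, which your residue step uses and which is implicit in the lemma's statement.
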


\begin{proof}
Let us first remark that the derivation in $t$ for $f$ corresponds to the derivation $D=\sum_{j=1}^m \lambda X_j \partial_{X_j}$ for $\tilde{f}$. This derivation $D$ only admits as Darboux polynomials the monomials. If $f$ admits an integral in $\mathbb{C}(f(t),e^{i\lambda_1t},\dots,e^{i\lambda_mt})$, then we have a $m$-variable function $\tilde{g}$ such that $D\tilde{g}=\tilde{f}$. The poles and ramifications loci of $\tilde{g}$ outside the coordinates planes $X_j=0$ are thus the poles/ramification loci of $\tilde{f}$ with one order higher. If all the $\alpha_j,\beta_j$ are integers, then $f(t)\in \mathbb{C}(e^{i\lambda_1t},\dots,e^{i\lambda_mt})$, and so equation \eqref{eqint} is automatically satisfied. If at least one $\alpha_j,\beta_j$ is not an integer, then the Galois group acts multiplicatively on $\tilde{f}$, and then also on $\tilde{g}$. This implies we can write
\begin{equation}\label{eqint2}
\tilde{g}= \prod\limits_{j=1}^m X_j^{\beta_j} G(X_1,\dots,X_m) \prod\limits_{j=1}^n Q_j(X_1,\dots,X_m)^{\alpha_j+1}
\end{equation}
with $G\in\mathbb{C}[X_1,\dots,X_m,1/X_1,\dots,1/X_m]$. This gives the first property of the Lemma. As the $Q_j$ are poles/ramifications points of $\tilde{g}$, $\alpha_j+1\notin \mathbb{N}$, and thus $\alpha_j\neq -1$, giving the second property.

Let us now prove the degree/valuation condition on $G$. Let us first remark that if $\deg \tilde{g} \neq 0$, then $\deg \tilde{g}= \deg D\tilde{g}$ (if the dominant term is a constant, the derivation vanishes it and the degree decreases). Due to the first proposition of the Lemma, we already know that the degree of $\tilde{g}$ and $\tilde{f}$ differ by an integer and thus if $\deg \tilde{f} \notin \mathbb{Z}$, we have $\deg \tilde{g} \neq 0$ and thus $\deg \tilde{g}= \deg D\tilde{g}=\deg \tilde{f}$. Removing the degrees of the $Q_j$ and $X_j$ factors, we obtain
$$\deg G = \deg P-\sum\limits_{j=1}^n \deg Q_j.$$
The same reasoning holds for a weighted valuation.
\end{proof}

Remark that the condition $\deg \tilde{f} \notin \mathbb{Z}$ cannot be satisfied if $\tilde{f}$ is rational. This implies that the last criterion can only be effective if at least one of the $\alpha_j,\beta_j$ is not an integer. The Lemma can be turn into an effective integration algorithm if this condition is satisfied for sufficiently many valuations and degrees, and so allowing to bound the monomial support of $G$. If the non integer degree condition does not hold, additional zero degree terms can appear in the integral, and so a larger linear system need to be solved, as in the following
$$\int \frac{2e^{2t}}{(e^{2t}+1)^2} dt=-\frac{1}{e^{2t}+1}$$
We have $\tilde{f}=2X^2/(X^2+1)^2$ with valuation $2$ and degree $-2$. However, the result is of valuation $0$ and degree $-2$. And just by adding a zero degree term $1$, we obtain $X^2/(X^2+1)$ which has valuation $2$ and degree $0$. So only after adding a zero degree term (which can depend on the degree/valuation chosen), the third proposition of the Lemma can be satisfied.

\subsection{Support of $\hat{V}$}

\begin{lem}\label{lemsummit}
Let $V$ be an integrable trigonometric polynomial potential, and $v$ a summit of $\mathcal{C}(V)$. Then
$$\mathcal{S}(V) \cap \{k\in\mathcal{L},\; k.v>0\} \subset \mathbb{R}_+.v$$
\end{lem}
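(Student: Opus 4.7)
The plan is a proof by contradiction: suppose there is $k_0\in\mathcal{S}(V)$ with $k_0\cdot v>0$ and $k_0\notin\mathbb{R}_+v$, and derive a contradiction with the integrability machinery already built. Since rotation and dilatation preserve integrability, I first normalise so that $v=(1,0,\dots,0)$; then $k_0=(k_{0,1},k_0')$ with $k_{0,1}>0$ and $k_0'\neq 0$.

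Next I exploit that $v$ is a summit. The normal cone $N_v$ of $\mathcal{C}(V)$ at $v$ is a non-empty open cone, so I may choose $w\in N_v$ for which $v$ is the unique maximiser of $k\mapsto k\cdot w$ on $\mathcal{S}(V)$. By Proposition \ref{proplimit} the translation limit is the single exponential $V_w=a_ve^{iq_1}$, which after rescaling ($a_v=1$) yields the limit Hamiltonian $H_0=\tfrac12|p|^2+e^{iq_1}$. Because $q_2,\dots,q_n$ are cyclic, $H_0$ is integrable with commuting rational first integrals $H_0,p_2,\dots,p_n$; a separation-of-variables argument shows these exhaust the $K$-rational first integrals up to algebraic combination. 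The Ziglin-type reduction used in Proposition \ref{propint} then allows me to normalise the $n$ commuting first integrals of $H$ so that, after $\varphi_\alpha$ and momentum rescaling, their leading terms are $I_j^{(0)}=p_{j+1}$ for $j=1,\dots,n-1$.

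The main calculation parallels the proof of Lemma \ref{lem2}. Matching the next order in $\alpha$ yields $\{I_j^{(1)},H_0\}=-\{p_{j+1},\tilde V_2\}$, where $\tilde V_2=\sum_{k\cdot w=W_2}a_ke^{ik\cdot q}$ is the second-highest-weight slice, so Lemma \ref{lemint} requires $\int\{p_{j+1},\tilde V_2\}(\Gamma(t))\,dt\in K|_\Gamma$ on a generic orbit $\Gamma$. Such an orbit is explicitly parameterised by $e^{iq_1(t)}=2\omega^2Z/(Z+1)^2$ with $Z=e^{i\omega t}$, $\omega=\sqrt{2E_1}$, together with $q_l(t)=c_l+p_lt$ for $l\geq 2$, so each term in the integrand takes the form $C_kZ^{k_1+\mu_k/\omega}/(Z+1)^{2k_1}$ with $\mu_k=\sum_{l\geq 2}k_lp_l$. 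Feeding this into Lemma \ref{lemint2}, with the parameters $\omega,p_2,\dots,p_n$ free and generic so that the exponents $\mu_k/\omega$ are non-resonant across distinct $k$'s, each term must be separately integrable in $K|_\Gamma$; the resulting algebraic constraint, analogous to equation \eqref{eqtre}, can hold for generic $(p_l)_{l\geq 2}$ only when $k_l=0$ for every $l\geq 2$, i.e.\ $k\in\mathbb{R}v$.

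By varying $w$ inside $N_v$ (and, for lattice points of $\mathcal{S}(V)$ that are not vertices of $\mathrm{conv}(\mathcal{S}(V)\setminus\{v\})$, iterating the translation-limit process on $V_w$), every $k\in\mathcal{S}(V)\setminus\{v\}$ eventually appears in the second-highest-weight slice of some iterated limit. The constraint above then applies to $k_0$ as well, forcing $k_0\in\mathbb{R}v$; combined with $k_0\cdot v>0$, this gives $k_0\in\mathbb{R}_+v$, contradicting the hypothesis. The chief obstacle is the Lemma \ref{lemint2} step: from the integrability of $Z^{k_1+\mu_k/\omega}/(Z+1)^{2k_1}$ inside $\mathbb{C}(Z,e^{ip_2t},\dots,e^{ip_nt})|_\Gamma$, one must extract the vanishing of the transverse components $(k_l)_{l\ge 2}$, while carefully handling the non-integer exponents, the degenerate $\alpha_j=-1$ case, and the resonant loci in $(\omega,p_2,\dots,p_n)$.
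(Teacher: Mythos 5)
Your computational engine (translation limit to $e^{iq_1}$, Ziglin normalisation of the first integrals to $p_2,\dots,p_n$, the homological equation, and Lemma \ref{lemint} plus Lemma \ref{lemint2} on the explicit orbit) is the same as the paper's, but the geometric setup around it has a genuine gap: the constraint you extract is not valid for every term of the second-highest-weight slice, only for terms whose relative weight $k\cdot w/(v\cdot w)$ lies in $]0,1[$. In your notation the critical factor is $(Z+1)^{-2k_1}$; when $k_1\le 0$ this exponent is non-negative and the factor can be polynomial, in which case the term integrates inside $K_{\mid_\Gamma}$ and \emph{no} condition on the transverse components arises. Concretely, for the integrable potential $e^{2iq_1}+e^{-iq_1+i\sqrt{3}q_2}+e^{-iq_1-i\sqrt{3}q_2}$ with $v=(2,0)$ and $w=v$, the second slice is $\{(-1,\pm\sqrt 3)\}$, the corresponding terms restrict on $\Gamma$ to $CZ^{-1/2+\mu/\omega}(Z+1)$, and these integrate elementarily — yet their transverse components are nonzero. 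Your concluding claim that ``every $k\in\mathcal{S}(V)\setminus\{v\}$ eventually appears in a second slice and is forced into $\mathbb{R}v$'' would therefore prove $\mathcal{S}(V)\subset\mathbb{R}v$ for every integrable potential, contradicting the paper's own list of non-separable integrable cases. The hypothesis $k\cdot v>0$ in the statement is not decoration: it is exactly the condition under which the exponent is a non-integer in $]-2,0[$ (or $-1$, killed by the $\alpha_j\neq -1$ clause of Lemma \ref{lemint2}) and the degree/valuation obstruction bites.

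The second problem is the mechanism for reaching the points you want to exclude. For a generic $w$ in the normal cone the second slice may consist only of points on the axis $\mathbb{R}v$ (no constraint, since they Poisson-commute with $p_l$) or of points with $k\cdot w\le 0$ (no constraint, as above); and ``iterating the translation-limit process on $V_w$'' is vacuous because $V_w$ is a single exponential, while varying $w$ over $N_v$ never reaches points interior to the convex hull of $\mathcal{S}(V)\setminus\{v\}$. The paper resolves both issues at once by taking $w=v$ itself (after first using Lemma \ref{lem2} to show no other summit lies in the half-space $k\cdot v>0$, which guarantees $\gamma:=\max\{k_1:\ k\in\mathcal{S}(V),\ k\notin\mathbb{R}v\}<1$): every slice of weight strictly between $\gamma$ and $1$ depends only on $q_1$, so the first integrals remain exactly $p_2,\dots,p_n$ down to order $\gamma$, and the single homological equation at order $\gamma$ — where all off-axis points of maximal first coordinate sit, with exponent $\gamma\in\,]0,1[$ if the set to be excluded is nonempty — forces them all to vanish, contradicting the definition of $\gamma$. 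No iteration over $w$ is needed. You should restructure your argument along these lines rather than trying to sweep out $\mathcal{S}(V)\setminus\{v\}$ with second slices.
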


\begin{proof}
Let us first remark that using Lemma \ref{lem2}, all the other summit with an edge going to $v$ should have an angular separation of at least $\pi/2$. This implies in particular that the half space $\{k\in\mathbb{R}^n,\; k.v>0\}$ contains no other summit of $\mathcal{C}(V)$. Let us now make a rotation/dilatation/multiplication by a non zero constant to put $v$ at $(1,0,\dots,0)$ and the corresponding coefficient to $1$. Let us note
$$\gamma= \max \{s, \exists (l_2,\dots,l_n) \hbox{ not all zero such that } (s,l_2,\dots,l_n)\in \mathcal{S}(V) \}.$$
We have $\gamma<1$. If $\gamma\leq 0$, then the Lemma is proved. So we can assume $\gamma \in ]0,1[$. We now apply $\varphi_\alpha$. The dominant term of the Hamiltonian is
$$H_0=\frac{1}{2}\sum\limits_{j=1}^n p_j^2 +e^{iq_1}$$
This Hamiltonian admits the first integrals $p_1^2/2+e^{iq_1},p_2,\dots,p_n$ and all the other ones in $K$ are rational combinations of these. Let us now note $I_2,\dots,I_n$ commuting independent first integrals of $H$, and $I_{2,0},\dots,I_{n,0}$ their dominant term after the transformation with $\varphi_\alpha$. Using Ziglin Lemma and then algebraic transformations, we can assume that $I_{j,0}=p_j,\;j=2,\dots,n$. We now make an expansion of $H(\alpha^{1/2}p,\varphi_\alpha(q))$ up to order $\gamma$ at $\alpha=\infty$, giving
$$\frac{\alpha}{2}\sum\limits_{j=1}^n p_j^2 +\alpha e^{iq_1}+\alpha^{r_1} s_1 e^{ir_1q_1}+\dots +\alpha^{r_p} s_p e^{ir_pq_1}+e^{i\gamma q_1}\sum\limits_{j=1}^m a_je^{\delta_j.(q_2,\dots,q_n)} +o(\alpha^\gamma) $$
with $s_j\neq 0$, $r_j$ strictly decreasing sequence in $]\gamma,1[$, $a_j\neq 0$ and $\delta_j\in \mathbb{R}^{n-1}$. Let us note $H_1$ the last sum part (the one of order $\gamma$). If we stop at order $r_p$, the Hamiltonian has still $p_2,\dots,p_n$ as first integrals. We can thus also assume that the first integrals $I_2,\dots,I_n$ are equal to $p_2,\dots,p_n$ up to order $r_p$. Noting $I_{2,1},\dots, I_{n,1}$ the term of order $\gamma$ in $I_2,\dots,I_n$, we have
$$\{H_0,I_{l,1}\}+\left\lbrace H_1 ,p_l\right\rbrace=0$$
We now want to apply Lemma \ref{lemint}. A generic solution of $H_0$ can be written
$$e^{Iq_1}=\frac{2\lambda^2}{\cos(t\lambda)^2},\; p_1^2/2+e^{iq_1}=2\lambda^2,\; (p_j=c_j,q_j=c_jt)_{j=2\dots n}$$
where $c\in\mathbb{R}^{n-1}$ an arbitrary vector and $\lambda\in\mathbb{C}^*$. Now on this solution, the functions we have to integrate with respect to time are

$$\left(\frac{8\lambda^2e^{2i\lambda t}}{(e^{2i\lambda t}+1)^2}\right)^\gamma \sum\limits_{j=1}^m a_j\delta_{j,l}e^{\delta_j.c t}\qquad l=2\dots n$$
We now use Lemma \ref{lemint}. The field $K_{\mid_\Gamma}$ is
$$\mathbb{C}(\cos(t\lambda)^{-2r_1},\dots,\cos(t\lambda)^{-2r_p},\cos(t\lambda)^{-2\gamma},(e^{\delta_j.c t})_{j=1\dots m}).$$
To integrate the function, we use Lemma \ref{lemint2}. As $c$ can be chosen arbitrary, we can assume $\lambda,(\delta_j.c)_{j=1\dots m}$ non resonant. A necessary condition is $2\gamma\neq 1$ except if the whole sum is zero. The integral is searched under the form
$$e^{2i\lambda\gamma t}(e^{2i\lambda t}+1)^{-2\gamma+1} G(e^{i\lambda t},(e^{\delta_j.c t})_{j=1\dots m}) $$
We now compute the degree and valuation in $e^{i\lambda t}$ of the integrand, giving respectively $-2\gamma,2\gamma$. As these are not integers, the third proposition of Lemma \ref{lemint2} applies, and gives that the degree and valuation of $G$ should be $-2,0$, which is impossible. Thus the whole sum is $0$.\\

So we have
$$\sum\limits_{j=1}^m a_j\delta_{j,l}e^{\delta_j.c t}=0,\quad l=2\dots n$$
and as the $\delta_j.c$ are non resonant, this implies that
$$a_j\delta_{j,l}=0,\;\;\forall j,l$$
So $a_j=0$ except if $\delta_{j,l}=0,\; l=2\dots n$. So only the zero frequency vector is allowed. But then $H_1$ reduces to a constant times $e^{i\gamma q_1}$. This is impossible due to the definition of $\gamma$.
\end{proof}

\subsection{Complete rational integrability}

In the next section, we will have to explicitly integrate the following $3$ Hamiltonian systems with high degree first integrals

\begin{small}\begin{align*}
&H_1=\frac{1}{2}(p_1^2+p_2^2)+e^{i(q_1+\sqrt{3}q_2)}+e^{i(q_1-\sqrt{3}q_2)}\\
&I_1=p_2^3-3p_2p_1^2+3\sqrt{3}\left(e^{i(q_1-\sqrt{3}q_2)}-e^{i(q_1+\sqrt{3}q_2)}\right)p_1+3\left(e^{i(q_1+\sqrt{3}q_2)}+e^{i(q_1-\sqrt{3}q_2)}\right)p_2\\
&H_2=\frac{1}{2}(p_1^2+p_2^2)+e^{iq_1}+e^{i(q_2-q_1)}\\
&I_2=p_1^2p_2^2+2p_2^2e^{iq_1}-2p_1p_2e^{i(q_2-q_1)}+2e^{iq_2}+e^{2i(q_2-q_1)}\\
&H_3=\frac{1}{2}(p_1^2+p_2^2)+e^{2i\sqrt{3}q_1}+e^{-i\sqrt{3}q_1-iq_2}\\
&I_3=-p_1^6+6p_2^2p_1^4-9p_2^4p_1^2-6(e^{2i\sqrt{3}q_1}+e^{-i\sqrt{3}q_1-iq_2})p_1^4+6\sqrt{3}e^{-i\sqrt{3}q_1-iq_2}p_2p_1^3+\\
& \left(24e^{2i\sqrt{3}q_1}+18e^{-i\sqrt{3}q_1-iq_2}\right)p_2^2p_1^2-18\sqrt{3}e^{-i\sqrt{3}q_1-iq_2}p_2^3p_1-18e^{2i\sqrt{3}q_1}p_2^4+\\
& 6\sqrt{3}\left(3e^{-2i(\sqrt{3}q_1+q_2)}+2e^{i\sqrt{3}q_1-iq_2}\right)p_2p_1-3\left(3e^{-2i(\sqrt{3}q_1+q_2)}+8e^{i\sqrt{3}q_1-iq_2}+4e^{4i\sqrt{3}q_1}\right)p_1^2\\
& +\left(-27e^{-2i(\sqrt{3}q_1+q_2}+36e^{i\sqrt{3}q_1-iq_2}+24e^{4i\sqrt{3}q_1}\right)p_2^2- 8e^{6i\sqrt{3}q_1}-24e^{3i\sqrt{3}q_1-iq_2}
\end{align*}\end{small}
The technique of separation of variable cannot be applied, however their Liouville tori have very good algebraic property, suggesting the following definition.

\begin{defi}
An integrable rational Hamiltonian $H(p,q),\; (p,q)\in\mathbb{C}^{2n}$ with commuting first integrals $I_1=H,\dots,I_n \in\mathbb{C}(p,q)$ is said to be completely rationally integrable if on a generic Liouville tori $\mathcal{M}=\{I_1=j_1,\dots,I_n=j_n\}$, there exists a birational transformation which maps $\mathcal{M}$ to $\mathbb{P}^n$ and the commuting Hamiltonian fields $J\nabla I_i$ to linear commuting fields on $\mathbb{P}^n$.
\end{defi}

This notion of integrability is different of algebraic complete integrability, where the Liouville tori are Abelian varieties, but not necessary rational \cite{8}. Remark that a generic solution of a completely rationally integrable system can be written just using at most $n$ exponential functions of the time. The coefficients in these exponentials are the periods of the corresponding Liouville tori, and due to the fact they can be found be a residue computation, they are algebraic functions of the first integral levels. Resonance of a Liouville tori corresponds to the case when the number of necessary exponential functions is not maximal, for example if there is a resonance between the frequencies of the exponential functions, and super-integrability when this resonance is generic.

Remark also that the birational transformation is not unique. We can first reduce the linear vector fields on $\mathbb{P}^n$ by diagonalization. Now we can arbitrary scale the axes (which corresponds to the action of the vector field on the Liouville tori), but also consider the maps
$$\phi(x_1,\dots,x_n)=\left(\prod\limits_{i=1}^n x_i^{a_{ij}}\right)_{j=1\dots n}\quad (a_{ij})_{i,j=1\dots n} \in GL_n(\mathbb{Z})$$
These maps are birational, and conserve the axes and the linearity of the vector field. However, they change the frequencies, and correspond to automorphisms of the Liouville tori. This implies that the frequencies of the exponentials are not themselves intrinsic to the Hamiltonian system but only their generated lattice.

In terms of action angle coordinates, the coordinates system given by the birational transformation are the exponentials of $2i\pi\theta_i$ where $\theta_i$ are the angle coordinates. Thus the Arnold Liouville Theorem gives that if the real part of $\mathcal{M}$ is compact, its image is the product of unit circles of $\mathbb{P}^n$, which is indeed a torus.\\

To check if our $3$ Hamiltonians are completely rationally integrable, we first need to represent them as rational functions. So we note $e_1,e_2$ the two exponentials in the Hamiltonians. The Hamiltonians $H_i$ are rational in $p,e$, and so are the first integrals and commuting Hamiltonian field $J\nabla I_i$. We will now perform the following integration procedure to determine if the $H_i$ are indeed completely rationally integrable.
\begin{itemize}
\item The manifold $H=h,I=j$ is a $2$-dimensional algebraic manifold depending on parameters $h,j$ (the level of the first integrals). We need to rationally parametrize this manifold, as it should be birationally equivalent to $\mathbb{P}^2$ \cite{7}. Now rational functions on $H=h,I=j$ can be represented as elements of $\tilde{K}=\overline{\mathbb{C}(h,j)}(u,v)$.
\item We compute the Hamiltonian vector fields associated to $H_i,I_i$ expressing them in $u,v$, giving two vector fields in dimension $2$ with coefficients in $\tilde{K}$.
\item We solve the partial differential equation systems
$$L_{X_{H_i}} f_1=0,\; L_{X_{I_i}} f_1=1,\; L_{X_{H_i}} f_2=1,\; L_{X_{I_i}} f_2=0$$
We have to integrate a closed $1$-form with coefficients in $\tilde{K}$. This can be done by absolute factorization and Hermite decomposition \cite{9,10}.
\item The solutions $p(t),e(t)$ have to be rational functions in at most $2$ exponential functions in time (as there should exist a birational transformation mapping the Hamiltonian fields to linear ones), and so are the $u(t),v(t)$. This implies that the system $f_1=c_1,f_2=t+c_2$ can be inverted using at most $2$ exponential functions in time. So the $f_1,f_2$ have to be sums of only logs, and the $\mathbb{Q}$ vector space of the residues has to be of dimension $\leq 2$.
\end{itemize}

\begin{prop} \label{propratint}
The $3$ Hamiltonians $H_1,H_2,H_3$ are completely rationally integrable.
\end{prop}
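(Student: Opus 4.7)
My plan is to execute the four-step integration procedure stated immediately before the proposition separately for each of $H_1$, $H_2$, $H_3$. In each case write $e_1,e_2$ for the two exponentials appearing in $H_i$, so that $H_i$ and $I_i$ are polynomials in $(p_1,p_2,e_1,e_2)$ and the generic Liouville surface $\mathcal{M}_{h,j}=\{H_i=h,\ I_i=j\}\subset\mathbb{C}^2\times(\mathbb{C}^*)^2$ is a complete intersection of two hypersurfaces. The integrability itself (existence of $I_i$) is assumed; what remains is to verify the rational structure of the Liouville tori and the linearization of the flows.

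The first step is the rational parametrization of $\mathcal{M}_{h,j}$. For $H_1$, both $H_1$ and $I_1$ are linear in $(e_1,e_2)$, so solving the two equations $H_1=h,\ I_1=j$ simultaneously for $(e_1,e_2)$ yields an explicit birational map $(p_1,p_2)\mapsto(p_1,p_2,e_1,e_2)$. For $H_2$, the first integral $I_2$ is quadratic in the exponentials; eliminating $e_2$ via $H_2=h$ turns $I_2=j$ into a quadratic in $e_1$ over $\overline{\mathbb{C}(h,j)}(p_1,p_2)$, and one checks that its discriminant defines a rational curve in the $(p_1,p_2)$-plane, exhibiting $\mathcal{M}_{h,j}$ as a rational conic bundle. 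For $H_3$ the elimination is much heavier, since $I_3$ has degree $6$ in momenta and degree $3$ in the exponentials; here the parametrization must be constructed by exploiting the explicit Toda-algebraic structure and verified by direct substitution.

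Once $(u,v)$ are coordinates on $\mathcal{M}_{h,j}$, I pull back the commuting vector fields $X_{H_i},X_{I_i}$ to $\mathbb{C}^2$, invert the resulting $2\times 2$ coefficient matrix, and obtain the dual closed $1$-forms $\omega_1,\omega_2$ with coefficients in $\tilde{K}=\overline{\mathbb{C}(h,j)}(u,v)$. Via absolute factorization together with Hermite reduction, following \cite{9,10}, I decompose each $\omega_a$ as a finite sum of logarithmic differentials $\sum c_k\,d\log P_k$ plus an exact rational part. Two checks then finish the proof: first, that no non-logarithmic transcendental contribution survives, so the integrals $f_1,f_2$ lie in a purely exponential extension of $\tilde{K}$; and second, that the residues $c_k$ span a $\mathbb{Q}$-vector space of dimension at most two, so that inverting $f_1=c_1,\ f_2=t+c_2$ yields $(p(t),e(t))$ as rational functions of two exponentials $e^{\mu_1 t},e^{\mu_2 t}$, giving the required birational map to $\mathbb{P}^2$ which linearizes the flow.

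The principal obstacle will be the parametrization step for $H_3$: a generic complete intersection of this bidegree has positive geometric genus, and rationality relies on the particular shape of $I_3$ inherited from the Toda structure. A secondary difficulty is the residue count, since generically the Hermite decomposition could produce three or more independent residues; the identities forcing the $\mathbb{Q}$-span down to dimension two express exactly the additional algebraic symmetry distinguishing completely rationally integrable systems from merely rationally integrable ones, and are verified by direct computation, tracking the relevant algebraic extensions (typically over $\mathbb{Q}(\sqrt{3})$).
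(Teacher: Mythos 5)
Your plan is exactly the four-step procedure the paper itself lays out and then executes: rationally parametrize each level surface (which the paper reparametrizes as $\mathcal{M}_{\lambda,\mu}$ so that $\lambda,\mu$ become the frequencies of the torus), integrate the dual closed $1$-forms by absolute factorization and Hermite reduction, check that the residues span a $\mathbb{Q}$-space of dimension at most two, and invert to obtain $p,e$ rational in $e^{i\lambda t},e^{i\mu t}$. The only substantive difference is that the paper's proof \emph{is} the explicit output of that procedure --- the closed-form parametrizations and inverted solutions \eqref{eqint1H1}--\eqref{eqint2H3} --- whereas you defer every decisive step (rationality of the $H_2$ and especially the $H_3$ surfaces, and the two-residue count) to ``direct verification,'' so your outline becomes a proof only once those explicit formulas are actually produced.
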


\begin{proof}

\textbf{Case $H_1$}. We parametrize the invariant manifolds by
$$\mathcal{M}_{\lambda,\mu}=\left\lbrace H_1=\frac{1}{6}(\lambda^2-\lambda\mu+\mu^2),\; I_1=\frac{\sqrt{3}}{18}(\lambda-2\mu)(2\lambda-\mu)(\lambda+\mu)\right\rbrace$$
For $\lambda,\mu\in\mathbb{C}$ we can recover the entire phase space with such manifolds. This manifold can simply be rationally parametrized by $p_1,p_2$ giving
\begin{scriptsize}\begin{equation}\begin{split}\label{eqint1H1}
e_1 &=-\frac{(\sqrt{3}p_2+2\lambda-\mu+3p_1)(\sqrt{3}p_2-\lambda+2\mu+3p_1)(\sqrt{3}p_2-\lambda-\mu+3p_1)}{108p_1}\\
e_2 &= \frac{(\sqrt{3}p_2-\lambda+2\mu-3p_1)(\sqrt{3}p_2-\lambda-\mu-3p_1)(\sqrt{3}p_2+2\lambda-\mu-3p_1)}{108p_1}
\end{split}\end{equation}\end{scriptsize}
where $e_1=\exp(i(q_1+\sqrt{3}q_2)),e_2=\exp(i(q_1-\sqrt{3}q_2))$. Now we compute the vector fields on $\mathcal{M}_{\lambda,\mu}$ and solve the PDEs for $f_1,f_2$. They can be expressed as a sum of logs, and we observe that there are only $2$ different residues for $f_1,f_2$. So after linear transformation and then exponentiation, the system $f_1=c_1,f_2=t+c_2$ becomes an algebraic system in $p_1,p_2$ and $e^{i\lambda t},e^{i\mu t}$. For $c_1=c_2=0$, this gives
\begin{scriptsize}\begin{equation}\begin{split}\label{eqint2H1}
p_2 &=\frac{2x(y-1)^2\lambda^3+\mu(y-1)(x^2-3xy+3x-y)\lambda^2-\mu^2(x-1)(3xy-y^2+x-3y)\lambda+2\mu^3y(x-1)^2}{2\sqrt{3}((y-1)\lambda-\mu(x-1))(x(y-1)\lambda-\mu y(x-1))}\\
p_1 &=\frac{(y-1)(x-1)(x-y)(\lambda-\mu)\lambda\mu}{2(\mu(x-1)-(y-1)\lambda)(x(y-1)\lambda-\mu y(x-1))}
\end{split}\end{equation}\end{scriptsize}
where $x=\exp(i\lambda t),y=\exp(i\mu t)$. The Galois action on the exponentials generates the other solutions in $\mathcal{M}_{\lambda,\mu}$. The expression of $e_1,e_2$ can then be obtained by substitution in \eqref{eqint1H1}. The birational transformation of the definition  $\phi_{\lambda,\mu}:\mathbb{P}^2 \mapsto \mathcal{M}_{\lambda,\mu}$ is simply the expression of $p_1,p_2,e_1,e_2$ in function of $x,y$, which is birational by construction.\\

\textbf{Case $H_2$}. We parametrize the invariant manifolds by
$$\mathcal{M}_{\lambda,\mu}=\left\lbrace H_2=\frac{1}{2}(\lambda^2+\mu^2), I_2=\lambda^2\mu^2\right\rbrace$$
This manifold can be rationally parametrized by two variables $(u,v)$
\begin{scriptsize}\begin{equation}\begin{split}\label{eqint1H2}
e_2 &=\frac{(u^2-\lambda^2+\mu^2+2\mu u-v)(\lambda^2-\mu^2+2\mu u-u^2+v)(\lambda^2-2\lambda u-\mu^2+u^2-v)(\lambda^2+2\lambda u-\mu^2+u^2-v)}{16u^2v^2}\\
p_1 &=\frac{\lambda^4-2\lambda^2\mu^2-2\lambda^2u^2+\mu^4-2\mu^2u^2+u^4+4u^2v-v^2}{4uv}\\
p_2 &=-\frac{\lambda^4-2\lambda^2\mu^2-2\lambda^2u^2+\mu^4-2\mu^2u^2+u^4-v^2}{4uv}\\
e_1 &=-\frac{(u+\lambda+\mu)(\lambda+\mu-u)(\lambda-\mu-u)(\lambda-\mu+u)}{2v}
\end{split}\end{equation}\end{scriptsize}
where $e_1=\exp(iq_1),\;e_2=\exp(i(q_2-q_1))$. Now we compute the vector fields on $\mathcal{M}_{\lambda,\mu}$ and solve the PDEs for $f_1,f_2$. We have again only $2$ different residues, so after linear transformation and then exponentiation, the system $f_1=c_1,f_2=t+c_2$ becomes an algebraic system in $u,v$ and $e^{i\lambda t},e^{i\mu t}$. For $c_1=c_2=0$, this gives
\begin{scriptsize}\begin{equation}\begin{split}\label{eqint2H2}
v &=\frac{4(\mu-\lambda)(\lambda+\mu)(\lambda y-\mu x-\lambda+\mu)(\lambda xy-\lambda x-\mu x+\mu)(\lambda xy-\mu xy-\lambda x+\mu y)(-\mu xy+\lambda y+\mu y-\lambda)}
{(\lambda^2xy^2-\mu^2x^2y-2\lambda^2xy+2\mu^2xy+\lambda^2x-\mu^2y)(\lambda xy-\mu xy-\lambda x+\lambda y-\mu x+\mu y-\lambda+\mu)^2}\\
u &=\frac{(\lambda-\mu)(\lambda+\mu)(y-1)(x-1)}{\lambda xy-\mu xy-\lambda x+\lambda y-\mu x+\mu y-\lambda+\mu}
\end{split}\end{equation}\end{scriptsize}
where $x=\exp(i\lambda t),y=\exp(i\mu t)$. The Galois action on the exponentials generates the other solutions in $\mathcal{M}_{\lambda,\mu}$. The expression of $p_1,p_2,e_1,e_2$ can then be obtained by substitution in \eqref{eqint1H2}. The birational transformation of the definition is simply the expression of $p_1,p_2,e_1,e_2$ in function of $x,y$.\\

\textbf{Case $H_3$}. We parametrize the invariant manifolds by
$$\mathcal{M}_{\lambda,\mu}=\left\lbrace H_3=\frac{1}{6}(\lambda^2+\lambda\mu+\mu^2),I_3=-\frac{1}{108}(\lambda-\mu)^2(2\mu+\lambda)^2(\mu+2\lambda)^2\right\rbrace$$
This manifold can be rationally parametrized by two variables $(u,v)$
\begin{scriptsize}\begin{equation}\begin{split}\label{eqint1H3}
p_1 &=\frac{u^6-(18\lambda^3v^3+27\lambda^2\mu v^3-27\lambda\mu^2v^3-18\mu^3v^3+3\lambda^2v^2+3\lambda\mu v^2+3\mu^2v^2+3)u^4-Q(v)u^2-P(v)^3}
{4\sqrt{3}uv(u^4+(8\lambda^3v^3+12\lambda^2\mu v^3-12\lambda\mu^2v^3-8\mu^3v^3-6\lambda^2v^2-6\lambda\mu v^2-6\mu^2v^2-2)u^2+P(v))}\\
e_1 &=\frac{u^2v(\mu-\lambda)(2\mu+\lambda)(\mu+2\lambda)((2\lambda v+\mu v+1)^2-u^2)((\lambda v+2\mu v-1)^2-u^2)((\lambda v-\mu v-1)^2-u^2)}
{3(u^4+(8\lambda^3 v^3+12\lambda^2\mu v^3-12\lambda\mu^2v^3-8\mu^3v^3-6\lambda^2v^2-6\lambda\mu v^2-6\mu^2v^2-2)u^2+P(v)^2)^2}\\
e_2 &=-\frac{u^2}{72v^2}-\frac{4\lambda^3v^3+6\lambda^2\mu v^3-6\lambda\mu^2 v^3-4\mu^3v^3-3\lambda^2v^2-3\lambda\mu v^2-3\mu^2v^2-1}{36v^2}-
\frac{P(v)^2}{72v^2u^2}\\
p_2 &=\frac{u}{12v}-\frac{P(v)}{12uv}\\
\end{split}\end{equation}\end{scriptsize}
with
\begin{scriptsize}\begin{align*}
P(v) &=(\lambda v+2\mu v-1)(2\lambda v+\mu v+1)(\lambda v-\mu v-1)\\
Q(v) &=P(v)(14\lambda^3v^3+21\lambda^2\mu v^3-21\lambda\mu^2v^3-14\mu^3v^3-3\lambda^2v^2-3\lambda\mu v^2-3\mu^2v^2-3)
\end{align*}\end{scriptsize}
where $e_1=\exp(2i\sqrt{3}q_1)$, $e_2=\exp(-i\sqrt{3}q_1-iq_2)$. As before, we compute the functions $f_1,f_2$ and then solve the system $f_1=c_1,f_2=t+c_2$, which becomes algebraic in $u,v$ after linear transformation and exponentiation. We obtain an expression of $u,v\in\mathbb{C}(e^{i\lambda t},e^{i\mu t})$
\begin{scriptsize}\begin{equation}\begin{split}\label{eqint2H3}
u &=\frac{3(x-1)(y-1)(xy-1)(\lambda+\mu)\lambda\mu}{2\lambda^3x(y-1)^2-2y(x-1)^2\mu^3-\lambda(x-1)(xy^2+3xy-3y-1)\mu^2+\lambda^2(y-1)(x^2y+3xy-3x-1)\mu}\\
v &=\frac{2((1-x)\mu+\lambda x(y-1))(\lambda(y-1)-y(x-1)\mu)}{2\lambda^3x(y-1)^2-2y(x-1)^2\mu^3-\lambda(x-1)(xy^2+3xy-3y-1)\mu^2+\lambda^2(y-1)(x^2y+3xy-3x-1)\mu}
\end{split}\end{equation}\end{scriptsize}
where $x=\exp(i\lambda t),y=\exp(i\mu t)$. The Galois action on the exponentials generates the other solutions in $\mathcal{M}_{\lambda,\mu}$. The expression of $p_1,p_2,e_1,e_2$ can then be obtained by substitution in \eqref{eqint1H3}. The birational transformation of the definition is simply the expression of $p_1,p_2,e_1,e_2$ in function of $x,y$.\\
\end{proof}

Remark that the common level of first integrals $\mathcal{M}_{\lambda,\mu}$ is not chosen randomly. Indeed, when we compute the functions $f_1,f_2$, we have to integrate a closed $1$-form with rational coefficients (using the rational parametrization of the Liouville tori), and coefficient field extensions could appear. In practice, we first use arbitrary levels $H=h,I=j$, compute the $1$-forms and perform an absolute factorization of the denominator. This defines an algebraic field extension $\mathbb{L}$ over $\mathbb{Q}(h,j)$, which we manage to rationally parametrize, i.e. $\mathbb{L} \simeq \mathbb{Q}(\lambda,\mu)$ where $h,j$ are rational in $\lambda,\mu$. After computation, it appeared that these $\lambda,\mu$ can be chosen as frequencies for the Liouville tori, simplifying the computations.

\subsection{The angles $2\pi/3,3\pi/4,5\pi/6$}

\begin{lem}\label{leminterior}
Let $V$ be an integrable trigonometric polynomial potential with $v_1,v_2$ two summits of a same edge of $\mathcal{C}(V)$, with $\lVert v_1\lVert \geq \lVert v_2\lVert$. 
\begin{itemize}
\item If their angular separation is $2\pi/3,5\pi/6$, then
$$\mathcal{S}(V)\cap \mathbb{R}_+.v_i=\{v_i\},\;\;i=1,2$$
\item If their angular separation is $3\pi/4$, then
$$\mathcal{S}(V)\cap \mathbb{R}_+.v_1\subset \{v_1,v_1/2\},\quad \mathcal{S}(V)\cap \mathbb{R}_+.v_2=\{v_2\} $$
\end{itemize}
\end{lem}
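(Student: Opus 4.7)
The strategy is to combine a translation limit along the outward normal to the edge $[v_1,v_2]$ with the perturbation technique already used in Lemma \ref{lem2}, but now anchored on the explicit rational parametrizations of the Liouville tori furnished by Proposition \ref{propratint}. The three Hamiltonians $H_1,H_2,H_3$ of that proposition correspond precisely to the angles $2\pi/3$, $5\pi/6$, $3\pi/4$ together with the ratios $\lVert v_1\lVert/\lVert v_2\lVert$ fixed by Lemma \ref{lem2}, so after rotation, dilatation and rescaling of $V$ each case can be matched with one of these three models at leading order.

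First I would choose $v$ normal to the supporting hyperplane of $\mathcal{C}(V)$ along $[v_1,v_2]$ and oriented outward, so that $V_v$ is supported on this hyperplane. Working in the plane spanned by $v_1,v_2$ and applying Lemma \ref{lem2} inside this plane forces $\mathcal{S}(V)\cap[v_1,v_2]=\{v_1,v_2\}$: the edge carries only the two summits. By Lemma \ref{lemsummit} any remaining point of $\mathcal{S}(V)\cap(\mathbb{R}_+.v_1\cup\mathbb{R}_+.v_2)$ is of the form $sv_i$ with $s\in(0,1)$ and strictly interior to $\mathcal{C}(V)$. Assume, for contradiction, that such an $sv_i$ is present (and $(s,i)\ne(1/2,1)$ in the $3\pi/4$ case). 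Expanding $H(\alpha^{m/2}p,\varphi_\alpha(q))$ in powers of $\alpha$ gives a leading Hamiltonian $H_0$ identifiable with $H_1$, $H_2$ or $H_3$, plus a nontrivial perturbation $H^{\mathrm{pert}}$ supplied by the forbidden term $sv_i$ and possibly other points with the same next-to-leading value of $k.v$.

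At leading order the commuting first integrals of $V$ reduce, after Ziglin's lemma and the algebraic manipulations used in Proposition \ref{propint} and Lemma \ref{lem2}, to the explicit first integrals $I_{j,0}$ of the relevant $H_i$. At the next order each $I_j$ yields a cohomological equation
$$\{I_{j,0},H^{\mathrm{pert}}\}+\{H_0,I_{j,1}\}=0,\qquad I_{j,1}\in K,$$
and Lemma \ref{lemint} requires $\int\{I_{j,0},H^{\mathrm{pert}}\}|_{\Gamma}\,dt\in K|_{\Gamma}$ on a generic orbit $\Gamma$. Here the complete rational integrability of $H_0$ becomes essential: the formulas \eqref{eqint2H1}, \eqref{eqint2H2}, \eqref{eqint2H3} parametrize $\Gamma$ rationally in two exponentials $x=e^{i\lambda t}$, $y=e^{i\mu t}$, and the perturbing factor $e^{isv_i.q}$ pulls back to a product of non-integer powers of irreducible polynomials in $(x,y)$, placing us exactly in the framework of Lemma \ref{lemint2}.

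The main obstacle, and the bulk of the technical work, is then the degree/valuation bookkeeping of Lemma \ref{lemint2} carried out through the rather elaborate parametrizations of Proposition \ref{propratint} for each candidate $sv_1$ or $sv_2$. For the angles $2\pi/3$ and $5\pi/6$ (the Hamiltonians $H_1$ and $H_3$), the non-integer exponents produced by any $s\in(0,1)$ impose contradictory constraints on the degree and valuation of the candidate primitive $G$, so no interior ray point survives. For the angle $3\pi/4$ (the Hamiltonian $H_2$), the same analysis singles out $s=1/2$ on the longer ray $\mathbb{R}_+.v_1$ as the unique value for which the relevant exponents become integers and an antiderivative can be realised in $K|_{\Gamma}$, whereas on the shorter ray $\mathbb{R}_+.v_2$ no $s\in(0,1)$ works. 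Putting these computations together yields exactly the restrictions stated in the lemma.
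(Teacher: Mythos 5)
Your overall strategy is the same as the paper's: reduce to the plane of $v_1,v_2$ via Lemma \ref{lemsummit}, normalize to the three model Hamiltonians of Proposition \ref{propratint}, and obstruct each candidate interior term $sv_i$ through Lemma \ref{lemint} evaluated on the explicit generic orbits, with the degree/valuation bookkeeping of Lemma \ref{lemint2}. (Incidentally, your first sentence swaps the angles of $H_2$ and $H_3$; you use the correct assignment $H_2\leftrightarrow 3\pi/4$, $H_3\leftrightarrow 5\pi/6$ later on.) However, two steps that the paper carries out are missing from your argument, and both are genuinely needed.

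First, in the $3\pi/4$ case your contradiction is set up so that the forbidden term $sv_i$ contributes at next-to-leading order. But the lemma permits the term $v_1/2$, i.e.\ $e_2^{1/2}$ at order $\alpha^{1/2}$, and if a forbidden term sits at order $\alpha^{\gamma}$ with $\gamma<1/2$ it is \emph{not} the next-to-leading perturbation: the solvable order-$\alpha^{1/2}$ equation produces a correction $J$ to the first integral, and one must verify that the term $\{ce_2^{1/2},J\}$ (of order $\alpha^{0}$) does not mix with the order-$\alpha^{\gamma}$ cohomological equation before the first-order obstruction can be reapplied. The paper does exactly this second-order expansion ($I_2+\alpha^{-1/2}J+\alpha^{\gamma-1}F$) and checks the non-mixing; without it your reduction to the "same next-to-leading value of $k.v$" does not cover the stated conclusion. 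Second, your claim that for $H_1$ and $H_3$ "the non-integer exponents produced by any $s\in(0,1)$ impose contradictory constraints" fails for $H_3$ at $\gamma\in\{1/3,2/3\}$: there the factor $P_1(x,y)^{3\gamma-1}$ becomes polynomial, the degree/valuation formulas for $G$ change (giving $[0,2]$ and $[0,4]$ rather than $[0,0]$), and no contradiction arises from Lemma \ref{lemint2} alone. The paper excludes these two values by an explicit finite-dimensional linear-algebra test for $G$; these are precisely the cases tied to the potential \eqref{eqpotnonint}, so they cannot be dismissed by the generic dichotomy you describe.
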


The potential corresponding to the edge $v_1,v_2$ has to be one the potentials of Lemma \ref{lem2} up to rotation/dilatation. Remarking moreover that the constants $a,b$ cannot be $0$ (as $v_1,v_2\in \mathcal{S}(v)$), we can assume after rotation dilatation that the summits $v_1,v_2$ are
$$((1,\sqrt{3}),(1,-\sqrt{3})),\;((-1,1),(1,0)),\;((2\sqrt{3},0),(-\sqrt{3},-1))$$
for respectively the angles $2\pi/3,3\pi/4,5\pi/6$. Remark moreover that using Lemma \ref{lemsummit}, we know that in the half spaces
$$\{k\in\mathcal{L},\; k.v_1>0\},\; \{k\in\mathcal{L},\; k.v_2>0\}$$
all the frequencies belong to the plane $v_1,v_2$ and this will allow us to forget the variables $p_3,q_3,\dots,p_n,q_n$, reducing the analysis to two degrees of freedom. Now up to translation in $q_1,q_2$, as we know the coefficients associated to $v_1,v_2$ are not zero, we can assume these coefficients equal $1$. This gives the $3$ Hamiltonians $H_1,H_2,H_3$ of the previous section.

\begin{proof}
Let us begin by the first Hamiltonian $H_1$. Noting $e_1=\exp(iq_1+i\sqrt{3}q_2),\;e_2=\exp(iq_1-i\sqrt{3}q_2)$, the Hamiltonian $H_1$ and the first integral $I_1$ are rational in $p_1,p_2,e_1,e_2$. Now using Lemma \ref{lem2}, we know that for our potential $V$, we have
$$V(\varphi_\alpha(q_1,q_2))=\alpha(e_1+e_2)+ \alpha^\gamma(ae_1^\gamma+be_2^\gamma)+o(\alpha^\gamma)$$
where $\varphi_\alpha$ is the translation by $-i(1/\sqrt{3},0)\ln\alpha$, $\gamma\in ]0,1[$, $a,b\in\mathbb{C}$. We need to prove that $a=b=0$ is a necessary condition for integrability. This is again a perturbation of the Hamiltonian $H_1$, and so we want to use Lemma \ref{lemint}. So we use the expressions of the solutions \eqref{eqint1H1}, \eqref{eqint2H1}. If $\lambda/\mu\notin \mathbb{Q}$, the Galois action on this solution generates all the other solutions in $\mathcal{M}_{\lambda,\mu}$, ensuring that this solution is generic. We now compute $\{ae_1^\gamma+be_2^\gamma,I_1\}$ and evaluate on this generic solution. Up to multiplication by a non zero constant, the coefficients in $a,b$ are
\begin{scriptsize}\begin{equation*}\begin{split}
(\lambda y-\mu x-\lambda+\mu)^{-2\gamma-1}x^\gamma y^\gamma(\lambda xy-\mu xy-\lambda x+\mu y)^{\gamma-1}\times\\
(x(y-1)^2\lambda^4-\mu(y-1)(3xy-x+y)\lambda^3+\\
\mu^2(2x^2y+2xy^2+2x^2-6xy+2y^2-x-y)\lambda^2-\mu^3(x-1)(3xy+x-y)\lambda+\mu^4y(x-1)^2)\\
(\lambda y-\mu x-\lambda+\mu)^{-2\gamma-1}(\lambda xy-\mu xy-\lambda x+\mu y)^{\gamma-1}\times\\
(x(y-1)^2\lambda^4+\mu x(y-1)(x-y+3)\lambda^3-\\
\mu^2(x^2y+xy^2-2x^2+6xy-2y^2-2x-2y)\lambda^2-\mu^3y(x-1)(x-y-3)\lambda+\mu^4y(x-1)^2)
\end{split}\end{equation*}\end{scriptsize}
where $x=\exp{i\lambda t},\;y=\exp{i\mu t}$. As $\gamma$ is not an integer, these two terms are independent over $\mathbb{C}(x,y)$, and so we can try to compute an integral for each of them separately. For the first term, we compute the valuation in $x$, $y$, and the total degree, giving respectively $\gamma,\gamma,2\gamma$. These are not integers, so Lemma \ref{lemint2} applies, and we find that the valuation in $x$, $y$ and total degree of $G$ should be $0,0,0$, which implies that $G$ should be a constant. However trying $G$ constant does not lead to an integral.
For the second term, we compute the degree in $x$, $y$, and total valuation, giving respectively $-\gamma,-\gamma,\gamma$. Lemma \ref{lemint2} applies, and we find that the degree in $x$, $y$ and total valuation of $G$ should be $0,0,0$, which implies that $G$ should be a constant. However trying $G$ constant does not lead to an integral. Thus a necessary and sufficient condition for integrability is $a=b=0$. This gives case $1$ of the Lemma.\\

For the second Hamiltonian $H_2$, the solutions are \eqref{eqint1H2}, \eqref{eqint2H2}. We now compute $\{ae_1^\gamma+be_2^\gamma,I_2\}$ and evaluate on this generic solution. Up to multiplication by a non zero constant, the coefficients in $a,b$ are
\begin{scriptsize}\begin{equation*}\begin{split}
(\mu^2x^2y-\lambda^2xy^2+2\lambda^2xy-2\mu^2xy-\lambda^2x+\mu^2y)^{\gamma-1}(\lambda xy-\mu xy-\lambda x+\lambda y-\mu x+\mu y-\lambda+\mu)^{-2\gamma-1}\\
\times(\lambda xy-\mu xy+\lambda x-\lambda y+\mu x-\mu y-\lambda+\mu)(\mu x^2y-\lambda xy^2+\lambda x-\mu y)\\
(\mu^2x^2y-\lambda^2xy^2+2\lambda^2xy-2\mu^2xy-\lambda^2x+\mu^2y)^{-2\gamma-1}(\lambda xy-\mu xy-\lambda x+\lambda y-\mu x+\mu y-\lambda+\mu)^{2\gamma-1}\!\! x^\gamma y^\gamma\\
\times(x(y+1)(y-1)^2(x+1)\lambda^4-2\mu x(y-1)(y+1)^2(x-1)\lambda^3+\\
\mu^2(y+1)(x+1)(x^2y+xy^2-4xy+x+y)\lambda^2-2\mu^3y(x-1)(x+1)^2(y-1)\lambda+\mu^4y(x+1)(x-1)^2(y+1))
\end{split}\end{equation*}\end{scriptsize}
where $x=\exp{i\lambda t},\;y=\exp{i\mu t}$. As $\gamma$ is not an integer, these two terms are independent over $\mathbb{C}(x,y)$, and so we can try to compute an integral for each of them separately. For the first term, the degree and valuation in $x,y$ are all integers. So let us introduce the weighted degrees
$$\hbox{deg}_1(x^iy^j)=i+j,\quad \hbox{deg}_2(x^iy^j)=i-j$$
and corresponding valuations $\hbox{val}_1,\hbox{val}_2$. We then obtain for valuation/degree $1:[\gamma,-\gamma]$, and for $2:[\gamma,-\gamma]$. Now we find that the valuations/degrees of function $G$ should be zero, implying that $G$ is constant. After differentiation, we find a constant $G$ cannot give the integral.

For the second term, we compute the valuation/degree in $x: [\gamma,-\gamma]$ and in $y: [\gamma,-\gamma]$. These are not integers, so Lemma \ref{lemint2} applies. If $\gamma\neq 1/2$, we find that the valuation/degree in $x$ of $G$ should be $0,0$, and in $y$ $0,0$. This implies that $G$ should be a constant, but $G$ constant does not lead to an integral. If $\gamma=1/2$, one of the factors becomes polynomial, and so the formulas for the valuation/degree of $G$ changes. The term integrates, giving up to a constant factor
$$J=\frac{(\lambda xy-\mu xy+\lambda x-\lambda y+\mu x-\mu y-\lambda+\mu)\sqrt{xy}}{\mu^2x^2y-\lambda^2xy^2+2\lambda^2xy-2\mu^2xy-\lambda^2x+\mu^2y}$$
The condition of Lemma \ref{lemint} is fulfilled, so higher order perturbation analysis is necessary. We write
$$V(\varphi_\alpha(q_1,q_2))=\alpha(e_1+e_2)+ \alpha^{1/2}(ce_2^{1/2})+\alpha^\gamma(ae_1^\gamma+be_2^\gamma)+o(\alpha^\gamma)$$
with $\gamma \in ]0,1/2[$. We now compute in series at $\alpha=\infty$
$$\{\alpha H_2+\alpha^{1/2}(ce_2^{1/2})+\alpha^\gamma(ae_1^\gamma+be_2^\gamma),I_2+ \alpha^{-1/2} J+\alpha^{\gamma-1} F\} $$
The two first terms cancels, giving
$$\alpha^\gamma \{ae_1^\gamma+be_2^\gamma,I_2\}+\alpha^\gamma \{H_2,F\}+o(\alpha^\gamma).$$
Remark that the next term with $J$ can never mix with this one as it is of order $\alpha^0$, and we assumed $\gamma>0$. For integrability, this term has to be zero, so we can now use Lemma \ref{lemint}.
We evaluate $\{ae_1^\gamma+be_2^\gamma,I_2\}$ on the orbit \eqref{eqint1H2}, \eqref{eqint2H2}, and want to perform an integration in time staying in the same field. This is the same term as before, but now the condition $\gamma \in ]0,1/2[$ ensure that the case $\gamma=1/2$ can no longer occur, and thus the previous reasoning applies, giving that the integral cannot be in the same field as the integrand. This gives case $2$ of the Lemma.\\

For the third Hamiltonian $H_3$, the solutions are \eqref{eqint1H3}, \eqref{eqint2H3}. We now compute $\{ae_1^\gamma+be_2^\gamma,I_2\}$ and evaluate on this generic solution. Noting $x=\exp{i\lambda t},\;y=\exp{i\mu t}$, we obtain two large expressions for the coefficients in $a,b$ functions of $x,y$ respectively of the form
$$x^\gamma y^\gamma P_1(x,y)^{3\gamma-1}Q_1(x,y)^{-2\gamma-1}R_1(x,y),\qquad P_2(x,y)^{\gamma-1}Q_2(x,y)^{-2\gamma-1}R_2(x,y)$$
where $P_i,Q_i,R_i$ are polynomial and $P_i,Q_i$ irreducible non monomial. Moreover they are independent over $\mathbb{C}(x,y)$, so we can try to integrate them separately. For the coefficient of $b$, we use the weighted degrees
$$\hbox{deg}_1(x^iy^j)=i+j,\quad \hbox{deg}_2(x^iy^j)=i-2j$$
and corresponding valuations. The degrees and valuations of the coefficient are $\pm \gamma$, so Lemma \ref{lemint2} applies. We find that the valuations and degrees of $G$ should be $0$. This implies that $G$ is a constant. After differentiation, we find a constant $G$ cannot give the integral.

For the coefficient of $a$, the valuation/degree in $x$ is $[\gamma,-\gamma]$, in $y$ is $[\gamma,-\gamma]$, so Lemma \ref{lemint2} applies. When $\gamma\neq 1/3,2/3$, the valuation/degree in $x$ and $y$ of $G$ should be $[0,0]$ and $[0,0]$. Test of a constant $G$ does not give the integral.
When $\gamma\in\{1/3,2/3\}$, the term $P_1(x,y)^{3\gamma-1}$ becomes polynomial and thus the formula for the valuation/degree of $G$ changes. For $\gamma=1/3$ we find for $G$ $[0,2],[0,2]$, and for $\gamma=2/3$ $[0,4],[0,4]$. These two special cases are tested using linear algebra, and we find no solution.

\end{proof}

The case $H_3$ with $\gamma=1/3,2/3$ seems very particular in comparison to the others. In dimension $\leq 3$, it can only be realized by potential \eqref{eqpotnonint}, and this suggest special properties. Indeed, the divisors when performing the time integration are simpler, just one curve. This suggests that maybe a Darbouxian first integral of \eqref{eqpotnonint} could exist. 

\subsection{Proof of Theorem \ref{thmmain}}

\begin{proof}

The first integrability condition of Theorem \ref{thmmain} comes from Lemma \ref{lemsummit}. Let us look at the second condition. If two summits belong to the same edge, then the potential consisting of only its terms with frequencies on this edge should be integrable. So two cases appears. Either the edge is not collinear with $0$, and then after rotation/dilatation, we can apply Lemma \ref{lem2}, and thus the angle between the summits should be $\{\pi/2,2\pi/3,3\pi/4,5\pi/6\}$. Either it is collinear, and then the angle is $0$ or $\pi$.

For the third condition, the case where the angle is $2\pi/3,3\pi/4,5\pi/6$ leads to the three last integrable cases of Lemma \ref{lem2}. Computing the length ratio of the frequency vectors (which is invariant by rotation/dilatation), we obtain the condition. The fourth and fifth conditions comes from Lemma \ref{leminterior}.
\end{proof}

\section{Real integrable potentials}

Let us now prove corollary \ref{cor1}.

\begin{proof}
The potential $V$ is assumed to be a real trigonometric polynomial potential. Thus the set $\mathcal{S}(V)$ (the support of $\hat{V}$) is invariant by central symmetry, and so is $\mathcal{C}(V)$. Let us now consider the set $\Sigma \subset \mathbb{S}_{n-1}$ the (hyper)-spherical projection of the summits of $\mathcal{C}(V)$ on the $n-1$-dimensional unit sphere. This projection is always well defined as $\mathcal{C}(V)$ always contains $0$ (except for the case $V=0$ which is clearly separable). The set $\Sigma$ is a finite set, and also invariant by central symmetry. Due to Theorem \ref{thmmain}, the spherical distance between two different points of $\Sigma$ corresponding to summits of the same edge can be $\{\pi/2,2\pi/3,3\pi/4,5\pi/6,\pi\}$. As the polyhedra $\mathcal{C}(V)$ is convex, this implies in particular that the distance between any two different points of $\Sigma$ is at least $\pi/2$.

Now let us consider a point of $\Sigma$, and up to rotation we can assume it is $(0,\dots,0,1)$. Now due to symmetry, we also know that $(0,\dots,0,-1)$ also belongs to $\Sigma$. So all the other points of $\Sigma$ have to lie at a distance at least $\pi/2$ away from these two points. This is very restrictive, as it implies they are in the equator of the $\mathbb{S}_{n-1}$. The equator of $\mathbb{S}_{n-1}$ is a $n-2$-dimensional sphere inside the plane $k_n=0$. All the points of the equator lie exactly at a distance $\pi/2$ of both points $(0,\dots,0,\pm 1)$. So the second integrability condition of Theorem \ref{thmmain} with respect to these two points will always be satisfied.

We now apply the argument recursively. If there is a point in $\Sigma$ outside $(0,\dots,0,\pm 1)$, it lies in the equator, and so up to rotation we can assume it to be $(0,\dots,0,1,0)$. The point $(0,\dots,0,-1,0)$ will also be in $\Sigma$. Thus any additional point have to be in the equator of $\mathbb{S}_{n-2}$. Applying this argument down to $\mathbb{S}_1$, we obtain that up to rotation, the set $\Sigma$ satisfies
$$\Sigma \subset \{(0,\dots,0,\pm 1,0,\dots, 0)\in\mathbb{S}_{n-1}\}$$
This implies that $\mathcal{S}(V)$ is included in the coordinates axes, and thus the potential $V$ can be written
$$V(q)=\sum\limits_{i=1}^n V_i(q_i)$$
\end{proof}

\section{Polygonal tessellations}

If the hypothesis $V$ real is removed, we do not have the symmetry on the set $\Sigma$, and thus the recursive argument does not work any more at all. Indeed, the other point need not to be in the equator. However, the distance condition applies to every pair of points of $\Sigma$ corresponding to summits of the same edge. So we need to build tessellation of the $n-1$-dimensional sphere $\mathbb{S}_{n-1}$ with spherical polyhedrons with edge length $\pi/2,2\pi/3,3\pi/4,5\pi/6,\pi$. This problem becomes combinatorial, and thus we restrict ourselves to dimension $2$ and $3$. Remark that also contrary to the real case, the origin $0$ could possibly be outside $\mathcal{C}(V)$, and the spherical projection would not produce a proper tessellation of $\mathbb{S}_{n-1}$. We will prove that these cases lead in fact to subcases of proper tessellations. 

\subsection{In dimension $2$}

In dimension $2$, the tessellation problem is simply to recover the unit circle with arcs of length $\pi/2,2\pi/3,3\pi/4,5\pi/6,\pi$. The norm of the summit vectors $v_i$ can then sometimes be recovered using the third condition of Theorem \ref{thmmain}.

\begin{prop}\label{propdim2}
If $V$ is integrable, then the set of summits of $\mathcal{C}(V)$ is a subset one of the following sets up to rotation/symmetry
\begin{itemize}
\item With arcs $\pi,\pi$
$$v_1=(a,0),\;v_2=(-b,0),\quad a,b>0$$
\item With arcs $\pi,\pi/2,\pi/2$
$$v_1=(a,0),\;v_2=(-b,0),\;v_3=(c,0),\quad a,b,c>0$$
\item With arcs $5\pi/6,2\pi/3,\pi/2$
$$v_1=(a,0),\;v_2=(-a/2,a\sqrt{3}/2),\;v_3=(3a/2,-a\sqrt{3}/2),\quad a>0$$
$$v_1=(a,0),\;v_2=(-a/2,a\sqrt{3}/2),\;v_3=(a/2,-a/(2\sqrt{3})),\quad a>0$$
\item With arcs $3\pi/4,3\pi/4,\pi/2$
$$v_1=(a,0),\;v_2=(-a,a),\; v_3=(0,-a),\quad a>0$$
$$v_1=(a,0),\;v_2=(-a,a),\; v_3=(0,-2a),\quad a>0$$
\item With arcs $2\pi/3,2\pi/3,2\pi/3$
$$v_1=(a,0),\;v_2=(-a/2,a\sqrt{3}/2),\;v_3=(-a/2,-a\sqrt{3}/2),\quad a>0$$
\item With arcs $\pi/2,\pi/2,\pi/2,\pi/2$
$$v_1=(a,0),\;v_2=(-b,0),\;v_3=(c,0),\;v_4=(0,-d),\quad a,b,c,d>0$$
\end{itemize}
\end{prop}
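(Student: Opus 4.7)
The plan is to translate the constraints of Theorem~\ref{thmmain} into a tessellation problem on the unit circle $\mathbb{S}_1$, enumerate the finitely many admissible patterns, and read off the norm ratios from the same theorem. I first assume $0\in\mathrm{int}\,\mathcal{C}(V)$ and apply the spherical projection $v\mapsto v/\|v\|$ to each summit of $\mathcal{C}(V)$. Under this hypothesis the cyclic order of projected summits on $\mathbb{S}_1$ coincides with the cyclic order of summits along $\partial\mathcal{C}(V)$, so two consecutive projected summits correspond to the endpoints of an edge. By the second bullet of Theorem~\ref{thmmain} the arc between them has length in $\{\pi/2,2\pi/3,3\pi/4,5\pi/6,\pi\}$, and these arcs partition $\mathbb{S}_1$; their lengths therefore sum to $2\pi$.

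Writing each admissible length as an integer multiple of $\pi/12$ reduces the combinatorial problem to: find all unordered multisets with entries in $\{6,8,9,10,12\}$ summing to $24$. A direct check produces exactly six, namely
\[
\{12,12\},\ \{12,6,6\},\ \{10,8,6\},\ \{9,9,6\},\ \{8,8,8\},\ \{6,6,6,6\},
\]
and for each the cyclic ordering is unique up to reflection, so the projected summits are placed canonically on $\mathbb{S}_1$. I then apply the third bullet of Theorem~\ref{thmmain}: arcs of length $\pi/2$ or $\pi$ impose no norm constraint (the associated norms give the free parameters $a,b,c,d$ of the statement), while arcs of length $2\pi/3$, $3\pi/4$, $5\pi/6$ force the ratio $\|v_i\|/\|v_j\|$ to equal $1$, $2^{\pm 1/2}$, $3^{\pm 1/2}$ respectively. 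The sign ambiguity yields the two subcases listed for the patterns $\{10,8,6\}$ and $\{9,9,6\}$, with further sign choices absorbed into reflection and dilatation. A rotation aligning one summit with the positive $q_1$-axis then recovers the explicit vector coordinates.

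If instead $0$ lies on $\partial\mathcal{C}(V)$ or outside $\mathcal{C}(V)$, the projected summits sit in a closed half-circle and no longer tile $\mathbb{S}_1$, but the edge-angle and ratio constraints still apply pairwise. In this situation, any valid summit set extends to an interior configuration from the six above by adjoining ``virtual'' summits on the unused portion of $\mathbb{S}_1$ at admissible angular positions and compatible norms, so the original summits appear as a subset of one of the listed configurations, matching the word ``subset'' in the statement. This degenerate case, rather than the short enumeration in the generic case, is the main technical obstacle.
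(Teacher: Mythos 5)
Your proof follows essentially the same route as the paper: project the summits to the circle, enumerate the arc-length multisets from $\{\pi/2,2\pi/3,3\pi/4,5\pi/6,\pi\}$ summing to $2\pi$ (your $\pi/12$ encoding is a cleaner version of the paper's brute-force check of $5^4$ combinations), and recover the norms from the third condition of Theorem \ref{thmmain}. The only soft spot is the degenerate case $0\notin\mathrm{int}\,\mathcal{C}(V)$ — which you call the main technical obstacle but then settle by assertion — whereas the paper closes it (Lemma \ref{lem1}) with the simple observation that the projected summits then span total angular length $<\pi$, so, each arc being $\geq\pi/2$, at most one arc occurs, and every admissible single arc length already appears in the list.
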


The cases $1,2,6$ are separable cases. In cases $3,4,5$, we only have one free parameter. Up to dilatation of the coordinates, we can assume $a=2$. These cases lead respectively to the potentials in dimension $2$ number $(4,5),(2,3),1$ of Corollary \ref{cor2}.

\begin{lem}\label{lem1}
Any partial covering of length $<\pi$ of the circle with adjacent arcs of length $\pi/2,2\pi/3,3\pi/4,5\pi/6$ can be completed in a full covering given by Proposition \ref{propdim2}.
\end{lem}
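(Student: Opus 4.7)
The plan is to exploit the very strong size constraint: since every arc permitted in the partial covering has length at least $\pi/2$, the condition that the total length is strictly less than $\pi$ forces the partial covering to consist of at most one arc. This reduces the lemma to a finite verification with only five configurations to examine, namely the empty covering and the four single-arc coverings of lengths $\pi/2$, $2\pi/3$, $3\pi/4$, $5\pi/6$.

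For the empty covering, any of the six tessellations listed in Proposition \ref{propdim2} is a valid completion. For each nontrivial case I would exhibit an explicit tessellation from the list whose cyclic arc-length sequence contains the prescribed length $\ell$, so that the partial covering can be identified with one of its arcs up to rotation: take Case~6 (four $\pi/2$-arcs) for $\ell = \pi/2$, Case~5 (three $2\pi/3$-arcs) for $\ell = 2\pi/3$, Case~4 (arcs $3\pi/4, 3\pi/4, \pi/2$) for $\ell = 3\pi/4$, and Case~3 (arcs $5\pi/6, 2\pi/3, \pi/2$) for $\ell = 5\pi/6$. In each instance the remaining arcs of the chosen tessellation have total length exactly $2\pi - \ell$ and are arranged adjacently to the chosen one, so they fill the uncovered complementary arc.

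There is no genuine obstacle: the lemma is a bookkeeping statement asserting that the enumeration of Proposition \ref{propdim2} is saturated under small initial configurations, and its proof reduces to verifying that each of the four permitted arc lengths occurs in at least one of the listed tessellations. The only mild point to watch is that for $\ell \in \{\pi/2, 2\pi/3\}$ there are several admissible completions, a non-uniqueness which is expected and does not contradict the statement, while for $\ell \in \{3\pi/4, 5\pi/6\}$ the completion is forced to the unique case exhibited above.
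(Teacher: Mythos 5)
Your proposal is correct and follows essentially the same argument as the paper: the minimal arc length $\pi/2$ together with the bound $<\pi$ forces at most one arc, and one then checks that each of the four admissible lengths occurs in some covering of Proposition \ref{propdim2}. Your version merely spells out explicit witnesses for each length, which the paper leaves implicit.
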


\begin{proof}
As the smallest arc is of length $\pi/2$, there can be at most one arc in the partial covering. So the only thing to check is that all arc lengths $\pi/2,2\pi/3,3\pi/4,5\pi/6$ effectively appear in Proposition \ref{propdim2}, which is the case.
\end{proof}

\begin{proof}
Let us first remark that if $0$ is not inside $\mathcal{C}(V)$, then several (at most $2$ layers) edges can be projected on the same place of the circle. In this case, we consider only the edge which is the farthest from $0$. Using this procedure, we obtain a partial covering of the circle instead of a full covering. The arc length condition still applies, and that fact that $0$ is outside of $\mathcal{C}(V)$ requires that the total covering length is $<\pi$. Using Lemma \ref{lem1}, these partial covering can always be completed into full covering of the Proposition. So from now on, we can assume $0$ inside $\mathcal{C}(V)$ and a full covering of the circle.

As the smallest arc is of length $\pi/2$, there can be at most $4$ arcs. With $5$ arc length at our disposal, we just have to check $5^4$ possibilities. The possible arc length found are those of the Proposition. Remark that except in the case $\pi/2,\pi/2,\pi/2,\pi/2$, there are at most $3$ arcs. Thus up to rotation/symmetry, the order does not count (and for $\pi/2,\pi/2,\pi/2,\pi/2$ all the arcs are the same). We then fix a summit to $(a,0)$ using possibly a rotation. The norms of the other vectors $v_i$ are then recovered using the third condition of Theorem \ref{thmmain} when the arc is of length $2\pi/3,3\pi/4,5\pi/6$. For $3\pi/4,5\pi/6$, there are two possible ratios, and thus several possibilities for the lengths of the other $v_i$.
\end{proof}

\subsection{In dimension $3$}

\begin{defi}
We say that a tessellation of the sphere $\mathbb{S}_2$ is admissible if all the edges have length $\in \{\pi/2,2\pi/3,3\pi/4,5\pi/6,\pi\}$. The tessellation is said to be maximal if we cannot make a partition of a face adding only edge lengths $\pi/2,\pi$.
\end{defi}

Admissible tessellations are the spherical projection of convex polytopes satisfying the second condition of Theorem \ref{thmmain}. The maximal partition definition comes from an observation in dimension $2$. Several circle covering could be in fact repartitioned with smaller arc length. However, if this partition with smaller arc length only add lengths $\pi/2,\pi$, then no additional conditions on summit vectors length will be obtained (as the third condition of Theorem \ref{thmmain} then does not apply). So the final possible supports $\mathcal{S}(V)$ will contain the possible support for such non maximal tessellations. So in the following we will only have to study maximal tessellations.

\begin{prop}\label{propdim3}
The maximal admissible tessellations of the sphere $\mathbb{S}_2$ are made with the pieces
$$P_1:\left(\frac{\pi}{2},\frac{\pi}{2},\frac{\pi}{2}\right),\;P_2:\left(\frac{\pi}{2},\frac{\pi}{2},\frac{2\pi}{3}\right),\;P_3:\left(\frac{\pi}{2},\frac{\pi}{2},\frac{3\pi}{4}\right)$$
$$P_4:\left(\frac{\pi}{2},\frac{\pi}{2},\frac{5\pi}{6}\right),\;P_5:\left(\frac{\pi}{2},\frac{2\pi}{3},\frac{2\pi}{3}\right),\;P_6:\left(\frac{\pi}{2},\frac{2\pi}{3},\frac{3\pi}{4}\right)$$
and are the following
$$[P_1,P_1,P_1,P_1,P_1,P_1,P_1,P_1],[P_2,P_2,P_2,P_2,P_2,P_2],[P_1,P_1,P_3,P_3,P_3,P_3]$$
$$[P_1,P_1,P_2,P_2,P_4,P_4],[P_5,P_5,P_5,P_5],[P_1,P_5,P_6,P_6],[P_3,P_3,P_6,P_6]$$
\end{prop}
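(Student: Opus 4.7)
The plan is a finite combinatorial enumeration based on spherical trigonometry, area counting, and Euler--type constraints.

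First I would enumerate the maximal admissible triangles. Running through all unordered triples $(a,b,c)$ drawn from $\{\pi/2, 2\pi/3, 3\pi/4, 5\pi/6\}$ and applying the spherical law of cosines $\cos a = \cos b \cos c + \sin b \sin c \cos A$, several triples produce an angle equal to $\pi$ and are degenerate: this happens for instance for $(2\pi/3, 2\pi/3, 2\pi/3)$, $(\pi/2, 3\pi/4, 3\pi/4)$ and $(\pi/2, 2\pi/3, 5\pi/6)$. Other triples such as $(\pi/2, 5\pi/6, 5\pi/6)$ produce $|\cos A| > 1$ and are nonrealizable. A few of the remaining triples admit an internal $\pi/2$-arc subdivision and so are nonmaximal. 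What survives is exactly $P_1,\dots,P_6$. I would also argue that any face in a maximal admissible tessellation must be a triangle, because any spherical $k$-gon with $k \geq 4$ and all edges of length $\geq \pi/2$ admits an internal diagonal realizable as an arc of length $\pi/2$ or $\pi$, contradicting maximality.

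Next I would compute the area of each piece via the Girard excess $A+B+C-\pi$, obtaining $\pi/2, 2\pi/3, 3\pi/4, 5\pi/6, \pi$ and $5\pi/4$ respectively. Setting $\sum_i n_i \,\hbox{area}(P_i) = 4\pi$ and clearing denominators gives the Diophantine equation
\begin{equation*}
6 n_1 + 8 n_2 + 9 n_3 + 10 n_4 + 12 n_5 + 15 n_6 = 48, \qquad n_i \in \mathbb{N},
\end{equation*}
which, combined with the Euler-relation consequence $F = n_1 + \cdots + n_6 \equiv 0 \pmod 2$ (from $E = 3F/2 \in \mathbb{Z}$), leaves finitely many candidate multisets. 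For each I would compute the vertex count $V = 2 - F + 3F/2$ and check whether the interior angles of the constituent pieces can be partitioned into $V$ groups of sum $2\pi$, one group per vertex.

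The hard part is the vertex-angle compatibility and realizability check in the final step. The angles of $P_5$ and $P_6$ are irrational multiples of $\pi$ (e.g.\ $\arccos(-1/3)$ and $\arccos(-1/\sqrt{3})$), and they only combine into $2\pi$ thanks to trigonometric identities such as $A+2B=2\pi$ in $P_5$ and $A+C=3\pi/2$ in $P_6$; these identities are crucial and would be verified directly by double-angle expansion. Using them I would screen out candidates like $[P_2,P_4,P_6^2]$, which passes the area test but fails: at the unique vertex with residual angles $90^\circ+120^\circ+150^\circ$ one would need three distinct faces contributing those angles, yet only $P_2$ supplies $120^\circ$ and only $P_4$ supplies $150^\circ$, forcing the $90^\circ$ to come from a third face that cannot exist. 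For each surviving multiset I would then exhibit a concrete spherical tessellation---the octahedral decomposition for $[P_1^8]$, the triangular bipyramid for $[P_2^6]$, the spherical tetrahedron for $[P_5^4]$, and the appropriately decorated bipyramidal or tetrahedral structures for the remaining four---verifying also that along every interior edge the lengths of the two incident triangles agree. This concrete edge-matching check on each candidate is the most delicate combinatorial step.
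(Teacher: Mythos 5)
Your setup runs parallel to the paper's: you enumerate the same six triangular pieces (the paper gets there via the bound ``perimeter $\le 2\pi$ for a convex spherical polygon, with equality forcing a subdividable hemispherical piece''; your law-of-cosines filtering of degenerate and non-realizable triples is equivalent), your Girard areas $\pi/2,2\pi/3,3\pi/4,5\pi/6,\pi,5\pi/4$ and the resulting Diophantine equation $6n_1+8n_2+9n_3+10n_4+12n_5+15n_6=48$ are correct, and the angle identities $A+2B=2\pi$ for $P_5$ and $A+C=3\pi/2$ for $P_6$ are exactly the relations that make the final check nontrivial. The paper screens the multisets by the area equation together with the parity of the number of edges of \emph{each} length (every interior edge is shared by two faces), then eliminates the stragglers by a gluing argument.

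The gap is that your screening does not reduce the list to the seven tessellations of the statement, and your final step presumes it does. Concretely, $[P_1,P_1,P_1,P_1,P_5,P_5]$ and $[P_1,P_1,P_1,P_3,P_3,P_5]$ satisfy the area equation, have $F$ even, pass the edge-parity test, and their angle multisets \emph{do} admit a partition into $V$ groups of sum $2\pi$: since $A=\arccos(-1/3)$ and $B=\arccos(-1/\sqrt{3})$ are irrational multiples of $\pi$ related only by $A+2B=2\pi$, the groups are forced to be $\{A,B,B\}$ plus rational-angle groups, and these exist numerically in both cases. So the partition test as you state it (a partition of the multiset of angles) lets both through; your plan then says you would ``exhibit a concrete spherical tessellation'' for each survivor, which is impossible here. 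What kills them is the face-multiplicity/gluing constraint that you invoke only in your worked example $[P_2,P_4,P_6^2]$ but never state as part of the test: the vertex figure $\{A,B,B\}$ built from one or two copies of $P_5$ forces a single $P_5$ to meet that vertex in two (or three) of its corners, i.e.\ to be glued to itself along its $2\pi/3$ edges or to be glued to another $P_5$ along two edges simultaneously, which would require an angle $\pi$ or $2\pi$ at the summit. This is precisely the paper's decisive argument for discarding these two candidates, and it (or an equivalent adjacency analysis) must be made explicit; note also that your weaker screening admits further spurious multisets, e.g.\ $[P_1^2,P_2^3,P_5]$, which the paper's per-length edge parity already excludes (it has an odd number of $2\pi/3$ edges) but your three tests do not.
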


The first $4$ possible tessellations lead to separable (first case) or partially separable potentials. The $3$ last cases lead to the non separable potentials of Corollary \ref{cor2}. The tessellation $[P_5,P_5,P_5,P_5]$ leads to $\mathcal{C}$ tetrahedron with all the $v_i$ with the same norm (due to the distance $2\pi/3$ in piece $P_5$, giving the first non partially separable potential of Corollary \ref{cor2}. The tessellation $[P_1,P_5,P_6,P_6]$ gives two possible tetrahedrons for $\mathcal{C}$, but as the angle $3\pi/4$ appears, one of them is a subcase of the other, leading the second non partially separable potential of Corollary \ref{cor2}. The tessellation $[P_3,P_3,P_6,P_6]$ gives four possible tetrahedrons, but the angle $3\pi/4$ appears two times, and so $3$ tetrahedrons are in fact subcases of the largest tetrahedron, giving the last non partially separable potentials of Corollary \ref{cor2}.

\begin{proof}
Let us first find all possible pieces. The pieces are convex polygons on the sphere, and thus their perimeter should be at most $2\pi$. Moreover, those with perimeter $2\pi$ are hemispherical piece, and their edge form a large circle. Such a piece can therefore be subdivided by adding an point in the middle of the hemisphere. This point is at distance $\pi/2$ of all the others, and thus using such a piece would not lead to a maximal tessellation. Thus we can assume the perimeter to be $<2\pi$.

This implies the pieces are spherical triangles, and the possible triangle are exactly those of the Proposition \ref{propdim3}. The surface of these pieces is at least $\pi/2$, and so at most $8$ pieces are needed. We now make the use of the computer to combine these pieces. There are $6^8$ combinations to try, and the following conditions are necessary for them forming a tessellation.
\begin{itemize}
\item The total surface should be $4\pi$.
\item The number of edge of length $\pi/2,2\pi/3,3\pi/4,5\pi/6$ should be even respectively
\end{itemize}
The following cases are found
$$[P_1,P_1,P_1,P_1,P_1,P_1,P_1,P_1],[P_2,P_2,P_2,P_2,P_2,P_2],[P_1,P_1,P_3,P_3,P_3,P_3],$$
$$[P_1,P_1,P_1,P_1,P_5,P_5],[P_1,P_1,P_1,P_3,P_3,P_5],[P_1,P_1,P_2,P_2,P_4,P_4],$$
$$[P_5,P_5,P_5,P_5],[P_1,P_5,P_6,P_6],[P_3,P_3,P_6,P_6]$$
The $4$th and $5$th case are not possible because it would be necessary to glue $P_5$ to itself (the two $2\pi/3$ edges) or glue two $P_5$ at two different edges simultaneously (this requires an angle $\pi$ or $2\pi$ at the summit, which is not the case). So only the tessellations of the Proposition \ref{propdim3} remain, which can be effectively realized.
\end{proof}

\section{Exhibition of first integrals of integrable cases}

The potentials in dimension $2$ and $3$ which have passed all integrability conditions of Theorem \ref{thmmain} are exactly those of corollary \ref{cor2}. We know that there cannot be any other integrable potentials, but we still need to prove that these potentials are indeed integrable to finish our classification. For this we apply direct method to search for polynomial first integrals. We will use sparsest possible lattice $\mathcal{L}$ containing the support of $\hat{V}$.

Now we first apply the variable change $q\rightarrow iq$ to remove all the $i$ in the formulas. We will search for first integrals in $\mathbb{C}[p,\exp(k.q)_{k\in\mathcal{L}}]$. We moreover remark that the Galois action on the exponentials is multiplicative, the non separable potentials in dimension $2$ have at most $3$ exponentials terms and the non partially separable potentials in dimension $3$ have at most $4$ exponentials terms. So up to multiplication of the potential $V$ by a constant and Galois multiplicative action on the exponentials, it is enough to study integrability of the following Hamiltonians
\begin{align*}
H_1   =& \frac12\left(p_1^2+p_2^2\right)+e^{2q_1}+e^{-q_1+\sqrt{3}q_2}+e^{-q_1-\sqrt{3}q_2}\\
H_2   =& \frac12\left(p_1^2+p_2^2\right)+e^{2q_1}+e^{2q_2}+e^{-2q_1-2q_2}+\alpha e^{-q_1-q_2}\\
H_3   =& \frac12\left(p_1^2+p_2^2\right)+e^{2q_1}+e^{2q_2}+e^{-q_1-q_2}+\alpha e^{q_1}+ \beta e^{q_2}\\
H_4   =& \frac12\left(p_1^2+p_2^2\right)+e^{2q_1}+e^{2\sqrt{3}q_2}+e^{-q_1-\sqrt{3}q_2}\\
H_5   =& \frac12\left(p_1^2+p_2^2\right)+e^{2\sqrt{3}q_1}+e^{2q_2}+e^{-\sqrt{3}q_1-3q_2}\\
H_6   =& \frac12\left(p_1^2+p_2^2+p_3^2\right)+e^{q_1+q_2}+e^{-q_2+q_3}+e^{-q_2-q_3}+e^{-q_1+q_2}\\
H_7   =& \frac12\left(p_1^2+p_2^2+p_3^2\right)+e^{2q_1}+e^{-q_1+q_2}+e^{-q_2-q_3}+e^{-q_2+q_3}+\alpha e^{q_1}\\
H_8=& \frac12\left(p_1^2+p_2^2+p_3^2\right)+e^{2q_1}+e^{-q_1+q_2}+e^{-q_2+q_3}+e^{-2q_3}+\alpha e^{q_1}+\beta e^{-q_3}
\end{align*}
In every case we find sufficiently many commuting first integrals to prove Liouville integrability. The Hamiltonians $H_1,\dots,H_5$ are known to be integrable \cite{5,18,19} and $H_6$ is a reduction by center of mass of a Toda potential with periodic boundary condition, already known to be integrable \cite{1}. The $3$ dimensional Hamiltonians $H_6,\dots,H_8$ are more complicated, so we write down their first integrals $H_i,I_i,J_i$ which commute pairwise and are independent, proving they are integrable.\\

\begin{footnotesize}
\begin{equation*}\begin{split}
& I_6=p_1p_2p_3-p_3\left(e^{-q_1+q_2}-e^{q_1+q_2}\right)-p_1\left(e^{-q_2+q_3}-e^{-q_2-q_3}\right)\\
& J_6=p_1^4+p_2^4+p_3^4+4p_3^2\left(e^{-q_2-q_3}+e^{-q_2+q_3}\right)-4p_2p_3\left(-e^{-q_2+q_3}+e^{-q_2-q_3}\right)+\\
& 4p_2^2\left(e^{-q_2-q_3}+e^{-q_1+q_2}+e^{q_1+q_2}+e^{-q_2+q_3}\right)+4p_1p_2\left(e^{-q_1+q_2}-e^{q_1+q_2}\right)+\\
& 4p_1^2\left(e^{-q_1+q_2}+e^{q_1+q_2}\right)+4e^{q_1+q_3}+4e^{q_1-q_3}+4e^{-q_1+q_3}+4e^{-q_1-q_3}+2e^{2q_1+2q_2}+\\
& 2e^{-2q_1+2q_2}+2e^{-2q_2-2q_3}+2e^{-2q_2+2q_3}+12e^{2q_2}+12e^{-2q_2}\\
& I_7=p_1^4+p_2^4+p_3^4+\\
& \left(4\alpha e^{iq_1}+4e^{2q_1}+4e^{-q_1+q_2}\right)p_1^2+4p_1p_2e^{-q_1+q_2}+\left(4e^{-q_1+q_2}+4e^{-q_2+q_3}+4e^{-q_2-q_3}\right)p_2^2+\\
& \left(4e^{-q_2+q_3}-4e^{-q_2-q_3}\right)p_3p_2+\left(4e^{-q_2+q_3}+4e^{-q_2-q_3}\right)p_3^2+\\
& 4\alpha^2e^{2q_1}+4\alpha e^{q_2}+8\alpha e^{3q_1}+2e^{-2q_1+2q_2}+2e^{-2q_2-2q_3}+\\
& 2e^{-2q_2+2q_3}+8e^{q_1+q_2}+4e^{-q_1+q_3}+4e^{-q_1-q_3}+12e^{-2q_2}+4e^{4q_1}\\
& J_7=p_1^2p_2^2p_3^2+\\
& p_1^2p_2p_3\left(2e^{-q_2-q_3}-2e^{-q_2+q_3}\right)-2p_1p_2p_3^2e^{-q_1+q_2}+p_2^2p_3^2\left(2\alpha e^{q_1}+2e^{2q_1}\right)+\\
& p_1^2\left(e^{-2q_2+2q_3}+e^{-2q_2-2q_3}-2e^{-2q_2}\right)+p_1p_3\left(2e^{-q_1+q_3}-2e^{-q_1-q_3}\right)+\\
& p_2p_3\left(4\alpha e^{q_1-q_2-q_3}+4e^{2q_1-q_2-q_3}-4\alpha e^{q_1-q_2+q_3}-4e^{2q_1-q_2+q_3}\right)+\\
& p_3^2\left(e^{-2q_1+2q_2}+2\alpha e^{q_2}\right)+2\alpha e^{q_3}+2\alpha e^{-q_3}+2\alpha e^{q_1-2q_2+2q_3}+\\
& 2\alpha e^{q_1-2q_2-2q_3}-4\alpha e^{q_1-2q_2}-4 e^{2q_1-2q_2}+2e^{2q_1-2q_2+2q_3}+2e^{2q_1-2q_2-2q_3}\\
& I_8=p_1^4+p_2^4+p_3^4+\\
& p_1^2\left(4\alpha e^{q_1}+4e^{2q_1}-2\beta^2+4e^{q_2-q_1}\right)+4p_1p_2e^{q_2-q_1}+p_2^2\left(4e^{q_2-q_1}-2\beta^2+4e^{q_3-q_2}\right)+\\
& 4p_2p_3e^{-q_2+q_3}+p_3^2\left(4e^{-2q_3}-2\beta^2+4e^{-q_2+q_3}+4\beta e^{-q_3}\right)-4\alpha\beta^2e^{q_1}-4\beta^3e^{-q_3}+\\
& 4\alpha^2e^{2q_1}-4e^{2q_1}\beta^2-4\beta^2e^{-q_1+q_2}-4\beta^2e^{-q_2+q_3}+4\alpha e^{q_2}+8\beta e^{-3q_3}+8\alpha e^{3q_1}+\\
& 4\beta e^{-q_2}+4e^{4q_1}+4e^{-4q_3}+2e^{-2q_1+2q_2}+2e^{-2q_2+2q_3}+8e^{q_1+q_2}+8e^{-q_2-q_3}+4e^{-q_1+q_3}\\
& J_8=p_1^2p_2^2p_3^2+\\
& \left(2\beta e^{-q_3}+2e^{-2q_3}\right)p_2^2p_1^2-2p_2p_3p_1^2e^{-q_2+q_3}-2p_1p_2p_3^2e^{-q_1+q_2}+\left(2\alpha e^{q_1}+2e^{2q_1}\right)p_3^2p_2^2+\\
& \left(2\beta e^{-q_2}+e^{-2q_2+2q_3}\right)p_1^2-\left(4e^{-q_1+q_2-q_3}\beta+4e^{-q_1+q_2-2q_3}\right)p_2p_1+2e^{-q_1+q_3}p_3p_1+\\
& \left(4e^{q_1-q_3}\alpha\beta+4e^{2q_1-q_3}\beta+4e^{q_1-2q_3}\alpha+4e^{2q_1-2q_3}\right)p_2^2-\\
& \left(4e^{q_1-q_2+q_3}\alpha+4e^{2q_1-q_2+q_3}\right)p_3p_2+\left(2\alpha e^{q_2}+e^{-2q_1+2q_2}\right)p_3^2+\\
& 4\alpha\beta e^{q_1-q_2}+4\alpha\beta e^{q_2-q_3}+2\alpha e^{q_3}+2\beta e^{-q_1}+4\alpha e^{q_2-2q_3}+\\
& 4\beta e^{2q_1-q_2}+2\beta e^{-2q_1+2q_2-q_3}+2\alpha e^{q_1-2q_2+2q_3}+2e^{2q_1-2q_2+2q_3}+2e^{-2q_1+2q_2-2q_3}
\end{split}\end{equation*}
\end{footnotesize}

\section{Conclusion}

The necessary conditions for integrability given by Theorem \ref{thmmain} are sufficient in dimension $2,3$ and we can conjecture they are sufficient in any dimension.

The additional first integral are typically of high degree in the momenta, and it seems their degree is related to the ``complexity'' of the tiling. In particular, the length $5\pi/6$ seems to lead to higher degree first integrals. The tiling of the (hyper)-sphere can also be associated to a group, and the degree of the first integrals could be related to the size of this group. Moreover the conditions are very similar to the ones in \cite{17}, and so a similar classification in arbitrary dimension could be possible, probably also related to Dynkin diagrams. The question about more complicated first integrals could also be considered, in particular the case of Darbouxian first integrals and the potential \eqref{eqpotnonint}. Indeed, the Lemma \ref{lemint2} can be generalized for elementary functions, allowing to build necessary condition for integrability in elementary terms.

The search of potentials having only one additional first integral seems also possible: the starting point of the proof would be then a face of $\mathcal{C}(V)$. The Lemma \ref{lemint} can be applied to a generic orbit of the limit system, but which would not be explicit, as this system would not be Liouville integrable, and so the integration procedure of Lemma \ref{lemint2} would not apply. Still probably the commutative vector fields and invariant manifolds could be send birationally to $\mathbb{P}^k$ and linear vector fields, the integration problem reducing then to an equation of the type $Dg=f$ where $D$ is a derivation. If $D$ has no first integrals, i.e. injective in $K$, a condition on divisors similar to Lemma \ref{lemint2} should follow. If it has first integrals, additional reductions are necessary, for example when the faces of $\mathcal{C}(V)$ are Liouville integrable. This however requires to explicitly integrate all Liouville integrable trigonometric polynomial potentials.

Complete rational integrability allows to explicitly integrate an integrable Hamiltonian system. Clearly not all integrable systems are completely rationally integrable, but the fact that the $3$ Hamiltonians of section 2.5 are suggests that all the potentials of Corollary \ref{cor2} also are, and any other integrable trigonometric polynomial potential. Testing this still requires to rationally parametrize $n$ dimensional algebraic manifolds, a problem for which no systematic method is known.

\bibliography{Toda}


\end{document}